\newtheorem{theorem}{Theorem}
\newtheorem{lemma}{Lemma}
\newtheorem{example}{Example}
\newtheorem{remark}{Remark}
\DeclareMathOperator{\ord}{Ord}
\begin{document}
\begin{center}
\huge{ Uniformly Convergent Difference Scheme for a Semilinear Reaction-Diffusion Problem on a Shishikin mesh}\\
\LARGE{Samir Karasulji\' c\footnote{corresponding author}, Enes Duvnjakovi\' c and Elvir Memi\' c}
\end{center}

\begin{abstract}
  In this paper we consider two difference schemes for numerical solving of a one--dimensional singularly perturbed boundary value problem.  We proved an $\varepsilon$--uniform convergence for both difference schemes on a Shiskin mesh. Finally, we present four numerical experiments  to confirm the theoretical results. 
\end{abstract}
\textbf{Key words:} singularly perturbed, boundary value problem, numerical solution, difference scheme, nonlinear, Shishkin mesh, layer--adapted mesh, $\varepsilon$--uniform convergent.\\
\textbf{2010 Mathematics Subject Classification.} 65L10, 65L11, 65L50. 
\section{Introduction}

We consider the semilinear singularly perturbed problem
\begin{equation}
\varepsilon^2y''(x)=f(x,y)\:\text{ on }\:\left( 0,1\right),
\label{uvod1}
\end{equation}
\begin{equation}
y(0)=0,\:y(1)=0,
\label{uvod2}
\end{equation}
where $\varepsilon$ is a small positive parameter. We assume that the nonlinear function $f$ is continuously differentiable,  i.e.  for $k\geq 2,\:f\in C^k(\left[0,1\right]\times \mathbb{R}),$  and that it has a strictly positive derivative with respect to $y$
\begin{equation}
\dfrac{\partial f}{\partial y}=f_y\geq m>0\:\text{ on }\:\left[0,1\right]\times \mathbb{R}\:\:(m=const).
\label{uvod3}
\end{equation}
The boundary value problem \eqref{uvod1}--\eqref{uvod2}, under the condition \eqref{uvod3}, has a unique solution (see \cite{lorenz1982stability}).  Numerical treatment of the problem \eqref{uvod1}, has been considered by many authors, under  different condition on the function $f,$ and made a significant contribution. 

We are going to analyze two difference schemes for the problem \eqref{uvod1}--\eqref{uvod3}. These difference schemes were constructed using the method first introduced by Boglaev \cite{boglaev1984approximate}, who constructed a difference scheme and showed convergence of order 1 on a modified Bakhvalov mesh. In our previous papers using the method \cite{boglaev1984approximate}, we constructed new difference schemes in \cite{samir2010scheme, samir2011scheme, samir2011uniformnly, samir2011skoplje, samir2012uniformnly, samir2012class, samir2013collocation, samir2015construction} and performed numerical tests, in \cite{samir2015uniformlyconvergent, samir2015uniformly} we constructed new difference schemes and we proved the theorems on the uniqueness of the numerical solution and the $\varepsilon$--uniform convergence on the modified Shishkin mesh, and again performed the numerical test. In \cite{samir2017construction} we used the difference schemes from \cite{ samir2015uniformly} and calculated the values of the approximate solutions of the problem \eqref{uvod1}--\eqref{uvod3}  on the mesh points and then we constructed an approximate solution.

Since in the boundary layers, i.e. near $x=0$ and $x=1,$ the solution of the problem \eqref{uvod1}--\eqref{uvod3}  changes rapidly, when parameter tends to zero, in order to get the $\varepsilon$--uniform convergence, we have to use  a layer-adapted mesh. In the present paper we are going to use a Shishkin mesh \cite{shishkin1988grid}, which is piecewise equidistant and consequently simpler than the modified Shishkin mesh we have already used in our mentioned papers.

\section{Difference schemes}
For a given positive integer $N,$ let it be an arbitrary mesh
\[0=x_0<x_1<\cdots<x_{N-1}<x_N=1,\]
with $h_i=x_i-x_{i-1},$ for $i=1,\ldots,N.$
 
Our first difference scheme has the following form
 \begin{equation}
\left( \dfrac{a_i+d_i}{2}\right)\overline{y}_{i-1}-\left( \dfrac{a_i+d_i}{2}+\dfrac{a_{i+1}+d_{i+1}}{2}\right)\overline{y}_{i}+\left( \dfrac{a_{i+1}+d_{i+1}}{2}\right)\overline{y}_{i+1}=\dfrac{\triangle d_{i}}{\gamma}\overline{f}_{i-1/2}+\dfrac{\triangle d_{i+1}}{\gamma}\overline{f}_{i/2},
\label{dodatak17}
\end{equation}
where $d_i=\frac{\beta}{\tanh(\beta h_{i-1})},\,a_i=\frac{\beta}{\sinh(\beta h_{i-1})},\,\overline{f}_{i-1/2}=f\left( \frac{x_{i-1}+x_{i}}{2},\frac{\overline{y}_{i-1}+\overline{y}_i}{2}\right)$ and $\triangle d_i=d_i-a_i.$

From \eqref{dodatak17}, we obtain next discrete problem
\begin{equation}
F\overline{y}=\left(F\overline{y}_0,F\overline{y}_1,\ldots,F\overline{y}_N \right)^T,
 \label{diskretni}
\end{equation}
where
\begin{align}
 F\overline{y}_0&=\overline{y}_0=0,\nonumber\\  
  F\overline{y}_i&=  
            \dfrac{\gamma}{\triangle d_i+\triangle d_{i+1}}\left[ \dfrac{a_i+d_i}{2}\overline{y}_{i-1}-\left( \dfrac{a_i+d_i}{2}
                    +\dfrac{a_{i+1}+d_{i+1}}{2}\right)\overline{y}_{i}\right.\nonumber\\
             & \hspace{3.5cm}
                 +\left. \dfrac{a_{i+1}+d_{i+1}}{2}\overline{y}_{i+1}
                     -\dfrac{\triangle d_{i}}{\gamma}\overline{f}_{i-1/2}-\dfrac{\triangle d_{i+1}}{\gamma}\overline{f}_{i/2}\right]=0,\,i=1,2,\ldots,N-1,
                          \label{diskretni1}\\
             F\overline{y}_N&= \overline{y}_N=0,\nonumber
\end{align}
and $\overline{y}:=(\overline{y}_0,\overline{y}_1,\ldots,\overline{y}_N)^T$ is the solution of the problem
\begin{equation}
 F\overline{y}=0.
 \label{diskretni2}
\end{equation}

Second difference scheme  has the following form
\begin{multline} 
  (3a_{i}+d_{i}+\triangle d_{i+1})\left(\tilde{y}_{i-1}-\tilde{y}_i\right)
     -(3a_{i+1}+d_{i+1}+\triangle d_{i})\left(\tilde{y}_i-\tilde{y}_{i+1}\right)  
=\dfrac{\tilde{f}_{i-1}+2\tilde{f}_{i}+\tilde{f}_{i+1} }{\gamma}\left(\triangle d_{i}+\triangle d_{i+1}\right),
\label{schema2}
\end{multline}
where $d_i=\frac{\beta}{\tanh(\beta h_{i-1})},\,a_i=\frac{\beta}{\sinh(\beta h_{i-1})},\,\tilde{f}_{i}=f\left( x_{i},\tilde{y}_{i}\right)$ and $\triangle d_i=d_i-a_i.$ 

From \eqref{schema2}, we obtain second discrete problem
 \begin{equation}
    G\tilde{y}=(G\tilde{y}_0,G\tilde{y}_1,\ldots,G\tilde{y}_N)^T,
  \label{scheme2a}
 \end{equation}
where 
\begin{align}
  G\tilde{y}_0&=\tilde{y}_0=0\\
  G\tilde{y}_i&= \frac{\gamma}{\triangle d_i+\triangle d_{i+1}}\biggl[(3a_{i}+d_{i}+\triangle d_{i+1})\left(\tilde{y}_{i-1}-\tilde{y}_i\right)
                              -(3a_{i+1}+d_{i+1}+\triangle d_{i})\left(\tilde{y}_i-\tilde{y}_{i+1}\right) \\ 
                    &\hspace{3cm}-\left.\dfrac{\tilde{f}_{i-1}+2\tilde{f}_{i}+\tilde{f}_{i+1}}
                                       {\gamma}\left(\triangle d_{i}+\triangle d_{i+1}\right)\right]=0,\;i=1,\ldots,N-1,\label{schema2aa}\\
  G\tilde{y}_N&=\tilde{y}_N=0,                                      
\end{align}
and $\tilde{y}=(\tilde{y}_0,\tilde{y}_1,\ldots,\tilde{y}_n)^T$ is the solution of the problem
\begin{equation}
       G\tilde{y}=0.
  \label{schema2b}
\end{equation}

\section{Theoretical background}

In this paper we use the maximum norm
 \begin{equation}
      \left\|u\right\|_{\infty}=\max_{0\leqslant i\leqslant N}\left|u_i\right|,
   \label{norm}
 \end{equation}
for any vector $u=\left(u_0,u_1,\ldots,u_n\right)^T\in\mathbb{R}^{N+1}$ and the corresponding matrix norm.

The next two theorems hold

\begin{theorem} {\rm\cite{samir2015uniformly}} The discrete problem \eqref{diskretni2} for $\gamma\geq f_y,$ has  the unique solution\\ $\overline{y}=(\overline{y}_0,\overline{y}_1,\overline{y}_2,\ldots,\overline{y}_{N-1},\overline{y}_{N})^{T},$ with $\overline{y}_0=\overline{y}_N=0.$ Moreover, the following stability inequality holds 
\begin{equation}
     \left\|w-v\right\|_{\infty}\leqslant \frac{1}{m}\left\|Fw-Fv\right\|_{\infty},
 \label{stabilnost}
\end{equation}
for any vectors $v=\left(v_0,v_1,\ldots,v_N\right)^T\in\mathbb{R}^{N+1},\,w=\left( w_0,w_1,\ldots,w_N\right)^T\in\mathbb{R}^{N+1}.$
\end{theorem}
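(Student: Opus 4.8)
The plan is to establish the two assertions—existence/uniqueness of the discrete solution and the stability estimate \eqref{stabilnost}—simultaneously, since in this kind of nonlinear difference scheme the stability inequality is precisely the tool that forces uniqueness. First I would rewrite the operator $F$ at an interior index $i$ in the form $F\overline y_i = r_i^- \overline y_{i-1} - (r_i^- + r_i^+)\overline y_i + r_i^+ \overline y_{i+1} - (\text{convex combination of } f\text{-values})$, reading off from \eqref{diskretni1} that the coefficients $r_i^\pm$, after dividing by $\triangle d_i + \triangle d_{i+1}$, are strictly positive (because $d_j > a_j > 0$ for $\beta h_{j-1} > 0$, so each $\triangle d_j > 0$, and $a_j + d_j > 0$). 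Thus the linear part of $F$ has the sign pattern of an inverse-monotone (M-matrix-type) operator: negative diagonal, positive off-diagonals in each row, with the row sums of the linear part equal to zero. This is the structural fact that drives everything.

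Next I would prove the stability inequality by a discrete maximum/barrier argument applied to the difference $z := w - v$. For fixed $w,v$ write, using the mean value theorem on the $f$-terms,
\[
  (Fw)_i - (Fv)_i = r_i^- z_{i-1} - (r_i^- + r_i^+) z_i + r_i^+ z_{i+1} - \frac{\triangle d_i}{\triangle d_i + \triangle d_{i+1}}\, f_y(\xi_i) \frac{z_{i-1}+z_i}{2} - \frac{\triangle d_{i+1}}{\triangle d_i+\triangle d_{i+1}}\, f_y(\eta_i)\frac{z_i + z_{i+1}}{2},
\]
for suitable intermediate points $\xi_i,\eta_i$; here I use $f_y \ge m > 0$ from \eqref{uvod3}. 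Collecting terms, the coefficient of $z_i$ on the right is $-(r_i^- + r_i^+) - \tfrac12(\cdots)f_y \le -(r_i^- + r_i^+) - m/2\cdot(\text{something})$; more to the point, the operator $z \mapsto (Fw)_\bullet - (Fv)_\bullet$ still has nonnegative off-diagonal weights and a diagonal that dominates them by at least (a positive multiple of) $m$. The standard argument is then: let $i^*$ be an index where $|z_{i^*}|$ is maximal; if $i^* \in \{0,N\}$ then $z_{i^*}=0$ and there is nothing to prove, while if $i^*$ is interior, evaluate the identity at $i^*$, move the off-diagonal terms (each bounded in absolute value by the corresponding weight times $\|z\|_\infty = |z_{i^*}|$) to the left, and conclude $m\,|z_{i^*}| \le |(Fw)_{i^*} - (Fv)_{i^*}| \le \|Fw - Fv\|_\infty$. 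This gives \eqref{stabilnost}. (One should double-check the precise constant: the $f_y$-terms contribute a factor depending on $\triangle d_i,\triangle d_{i+1}$ that sums to $1$, so the net lower bound on the diagonal excess is exactly $m$, matching the claimed $1/m$.)

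Finally, uniqueness follows immediately from \eqref{stabilnost}: if $F\overline y = F\overline y' = 0$ then $\|\overline y - \overline y'\|_\infty \le \tfrac1m \|F\overline y - F\overline y'\|_\infty = 0$. For existence I would invoke the standard argument for such monotone nonlinear systems—e.g. a homotopy/degree argument, or the observation that $F$ (after the sign normalization making the diagonal part an M-matrix plus a monotone nonlinearity) is a continuous, coercive, inverse-monotone map on $\mathbb{R}^{N-1}$ (the interior unknowns), hence surjective; the a priori bound from the stability estimate supplies the coercivity needed. Since the theorem is quoted from \cite{samir2015uniformly}, I would at this point simply cite that reference for the full details of the existence part and present the stability estimate as the self-contained core.

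The main obstacle I anticipate is bookkeeping with the weights: one must verify carefully that after dividing \eqref{diskretni1} by $\triangle d_i + \triangle d_{i+1}$ the off-diagonal coefficients are genuinely nonnegative \emph{and} that the mean-value contributions of the $f_y$-terms do not spoil the diagonal dominance—in particular that the intermediate-point coefficients $\tfrac{\triangle d_i}{\triangle d_i+\triangle d_{i+1}}$ and its complement really are a partition of unity, so that the worst-case diagonal excess is cleanly $\ge m$. Everything else is a routine discrete maximum principle.
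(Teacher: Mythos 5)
Your argument is essentially sound, but note that this paper contains no proof of the theorem at all: it is quoted verbatim from \cite{samir2015uniformly}, so the only comparison available is with that reference. There the result is obtained by computing the Fr\'echet derivative $F'$ of the discrete operator, showing that under $\gamma\geq f_y$ it is an inverse--monotone (M--)matrix with $\left\|(F')^{-1}\right\|_{\infty}\leqslant 1/m$, and then invoking Hadamard's global inverse function theorem, which delivers existence, uniqueness and the stability bound \eqref{stabilnost} in one stroke. Your route is a direct mean--value--theorem plus discrete maximum principle argument on $z=w-v$; it rests on exactly the same structural facts (positive off--diagonal weights, the $\triangle d_i/(\triangle d_i+\triangle d_{i+1})$ coefficients forming a partition of unity, diagonal excess at least $m$) and gives stability and uniqueness cleanly and more elementarily, but it leaves existence to a sketched surjectivity/degree argument, which is precisely the part Hadamard's theorem supplies for free. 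One detail you flagged but did not carry out deserves to be made explicit: the off--diagonal coefficients after subtracting the $f_y$--contributions remain nonnegative because $\gamma(a_i+d_i)\geq f_y(a_i+d_i)>f_y(d_i-a_i)=f_y\,\triangle d_i$, i.e.\ this is exactly where the hypothesis $\gamma\geq f_y$ enters; without that check the maximum--principle step is incomplete. With that one line added, your proof of the stability inequality and of uniqueness is complete and correct.
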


\begin{theorem} {\rm\cite{samir2015uniformlyconvergent}} The discrete problem \eqref{schema2b} has a unique solution $\tilde{y}$ for $\gamma\geqslant f_y$. Also, for every $u,v\in\mathbb{R}^{N+1}$ we have the following stabilizing inequality
\begin{equation*}
    \left\|u-v\right\|_{\infty}\leqslant \frac{1}{m}\left\|Gu-Gv\right\|_{\infty}.
\end{equation*}
\end{theorem}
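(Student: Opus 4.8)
The plan is to follow the same template that underlies Theorem 3.1: show that the nonlinear operator $G$ is, in an appropriate sense, an $M$-function (inverse-monotone), so that the discrete maximum principle applies and yields both uniqueness and the stability estimate with constant $1/m$. First I would write $G\tilde y_i$ in the generic three-point form
\[
G\tilde y_i \;=\; -r_i^{-}\,\tilde y_{i-1} \;+\; c_i\,\tilde y_i \;-\; r_i^{+}\,\tilde y_{i+1} \;-\; (\text{nonlinear terms in }\tilde f_{i-1},\tilde f_i,\tilde f_{i+1}),
\]
and verify the sign pattern: the off-diagonal coefficients $r_i^{\pm}=\frac{\gamma}{\triangle d_i+\triangle d_{i+1}}(3a_{i+1}+d_{i+1}+\triangle d_i)$ etc. are strictly positive (since $\beta>0$, $h_i>0$ give $a_i,d_i>0$ and $\triangle d_i=d_i-a_i>0$), and the diagonal coefficient $c_i$ equals $r_i^{-}+r_i^{+}$. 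This last identity — that the linear part of the scheme has exactly zero row sum — is the key structural fact to check from the definitions of $a_i,d_i,\triangle d_i$.

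Next I would handle the nonlinearity. Given two vectors $u,v\in\mathbb R^{N+1}$, set $z=u-v$ and subtract: because $\tilde f_j=f(x_j,\tilde y_j)$ depends only on the $j$-th component, the mean value theorem gives $f(x_j,u_j)-f(x_j,v_j)=f_y(x_j,\xi_j)(u_j-v_j)$ with $f_y(x_j,\xi_j)\ge m>0$. Hence
\[
(Gu-Gv)_i \;=\; -r_i^{-} z_{i-1} \;+\; \Big(c_i + \tfrac{\triangle d_i+\triangle d_{i+1}}{1}\cdot(\text{something})\cdot f_y(\cdot)\Big) z_i \;-\; r_i^{+} z_{i+1} \;+\;(\text{terms }f_y z_{i\pm1}),
\]
and the crucial point is that, for $\gamma\ge f_y$, the coefficients multiplying $z_{i-1}$ and $z_{i+1}$ stay nonpositive even after absorbing the $\tilde f_{i-1}$ and $\tilde f_{i+1}$ contributions, while the diagonal coefficient dominates. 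Concretely one wants the resulting matrix to be an L-matrix that is strictly diagonally dominant, with the diagonal exceeding the sum of the absolute values of the off-diagonals by at least (a positive multiple of) $m$ at each interior row; the condition $\gamma\ge f_y$ is precisely what guarantees the sign of the mixed terms like $3a_i\tilde f_{i-1}/\gamma$ does not spoil monotonicity. I expect this bookkeeping — tracking how the factor $3a_{i+1}+d_{i+1}+\triangle d_i$ interacts with $\tfrac{\triangle d_i+\triangle d_{i+1}}{\gamma}f_y$ — to be the main obstacle, and it is where the hypothesis $\gamma\ge f_y$ is used in an essential way.

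With inverse-monotonicity in hand, the standard barrier-function argument finishes the proof: let $i^\*$ be an index where $|z_{i^\*}|=\|z\|_\infty=\|u-v\|_\infty$ (it is an interior index unless $z\equiv 0$, since $z_0=z_N=0$). Evaluating $(Gu-Gv)_{i^\*}$ and using that the row sum of the linear part vanishes, the off-diagonal terms contribute at most $(\sum_{\text{off-diag}})\|z\|_\infty$ in absolute value against a diagonal term of size at least $(\sum_{\text{off-diag}}+m)\|z\|_\infty$, which after rearrangement gives
\[
m\,\|u-v\|_\infty \;\le\; |(Gu-Gv)_{i^\*}| \;\le\; \|Gu-Gv\|_\infty,
\]
i.e. the claimed stability inequality $\|u-v\|_\infty\le \tfrac1m\|Gu-Gv\|_\infty$. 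Uniqueness is immediate: if $Gu=Gv=0$ then $\|u-v\|_\infty\le 0$, so $u=v$; existence of a solution follows as in Theorem 3.1 / the cited reference by a standard fixed-point or continuation argument on the (now provably well-conditioned) nonlinear system, so I would either reproduce that briefly or cite \cite{samir2015uniformlyconvergent} for it.
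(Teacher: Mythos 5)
The paper gives no proof of this theorem; it is imported verbatim from \cite{samir2015uniformlyconvergent}, where the argument is essentially the one you outline: the Fr\'echet derivative of $G$ is shown to be a tridiagonal $L$-matrix whose rows are strictly diagonally dominant with excess bounded below by a positive multiple of $m$ (the off-diagonal signs survive the absorption of the $f_y$-terms precisely because $\gamma\geqslant f_y$ and $3a_i+d_i+\triangle d_{i+1}-(\triangle d_i+\triangle d_{i+1})=4a_i>0$), which gives the stability bound and uniqueness, with existence supplied by Hadamard's global homeomorphism theorem. Your sketch is therefore sound and follows the same route; just note that the one step you defer to the citation --- existence --- is genuinely not a consequence of the maximum-principle computation and does require that separate global-inverse argument.
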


In the following analysis we need the decomposition of the solution $y$ of the problem 
$(\ref{uvod1})-(\ref{uvod2})$ to the layer component $s$ and a regular component $r$, given in the following assertion. 

\begin{theorem} {\rm\cite{vulanovic1983} 
} The solution $y$ to problem $(\ref{uvod1})-(\ref{uvod2})$ can be represented in the following way:
\begin{equation*}
   y=r+s,
\end{equation*}
where for $j=0,1,...,k+2$ and $x\in[0,1]$ we have  that
\begin{equation}
\left|r^{(j)}(x)\right|\leqslant C,
\label{regularna}
\end{equation}
and
\begin{equation}
\left|s^{(j)}(x)\right|\leqslant C \varepsilon^{-j}\left(e^{-\frac{x}{\varepsilon}\sqrt{m}}+e^{-\frac{1-x}{\varepsilon}\sqrt{m}}\right).
\label{slojna}
\end{equation}
\label{teorema1}
\end{theorem}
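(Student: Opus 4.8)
The statement is a standard singular-perturbation decomposition result for a semilinear reaction–diffusion problem, and the reference is Vulanović. So I should reconstruct the classical argument. The plan is to build the regular component $r$ as an asymptotic expansion in powers of $\varepsilon^2$ plus a remainder, and then define the layer component by $s = y - r$ and estimate it via a barrier/maximum-principle argument after linearizing.

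First I would construct the regular part. Set $r_0$ to be the solution of the reduced equation $f(x, r_0(x)) = 0$; this is well-defined and smooth because $f_y \ge m > 0$ lets us apply the implicit function theorem, and $r_0 \in C^k[0,1]$ follows from $f \in C^k$. Then define successive corrections $r_1, r_2, \dots$ by differentiating the relation $\varepsilon^2 r'' = f(x,r)$ formally in powers of $\varepsilon^2$: each $r_{j}$ solves a linear equation of the form $f_y(x, r_0) r_{j} = (\text{known function of } r_0,\dots,r_{j-1})$, hence is again smooth by dividing by $f_y \ge m$. Truncating, put $r = \sum_{j=0}^{M} \varepsilon^{2j} r_j$ for $M$ large enough (depending on $k$); then $r$ satisfies $\varepsilon^2 r'' = f(x,r) + \varepsilon^{2(M+1)} \rho(x,\varepsilon)$ with $\rho$ bounded, and the bound $|r^{(j)}| \le C$ for $j = 0, \dots, k+2$ is immediate from smoothness of the $r_j$'s and boundedness of their derivatives on the compact interval.

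Next I would handle the layer part $s := y - r$. Subtracting the two equations and using the mean value theorem, $s$ satisfies a linear-type problem $\varepsilon^2 s'' - p(x) s = -\varepsilon^{2(M+1)}\rho(x,\varepsilon)$ with $p(x) = f_y(x, \xi(x)) \ge m > 0$, and boundary data $s(0) = -r(0) = -r_0(0) + O(\varepsilon^2)$, $s(1) = -r(1) = O(1)$ (these are $O(1)$ since $r_0$ need not vanish at the endpoints). The zeroth-order estimate $|s(x)| \le C\big(e^{-x\sqrt m/\varepsilon} + e^{-(1-x)\sqrt m/\varepsilon}\big)$ then comes from the barrier function $B(x) = C\big(e^{-x\sqrt m/\varepsilon} + e^{-(1-x)\sqrt m/\varepsilon}\big)$: one checks $\varepsilon^2 B'' - m B \le 0$ so that $(\varepsilon^2 \tfrac{d^2}{dx^2} - p)(B \pm s) \le 0$ after absorbing the tiny forcing term, and the comparison principle (available because $-p \le -m < 0$) gives $|s| \le B$. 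For the derivative estimates $|s^{(j)}| \le C\varepsilon^{-j}(\cdots)$, I would proceed by a bootstrapping/stretched-variable argument: rescale near $x=0$ via $t = x/\varepsilon$, so that $s$ as a function of $t$ solves an equation with $O(1)$ coefficients on a long interval; interior Schauder (or elementary ODE) estimates on unit subintervals give bounds on $t$-derivatives of $s$ that decay like the established exponential, and translating back to $x$ multiplies by $\varepsilon^{-j}$. One also differentiates the ODE $\varepsilon^2 s'' = p s + (\text{l.o.t.})$ directly to trade second derivatives for lower ones: $s^{(j+2)} = \varepsilon^{-2}\big(p s^{(j)} + \dots\big)$, and induct.

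The main obstacle is the derivative bounds near the layers with the sharp $\varepsilon^{-j}$ weights — the zeroth-order estimate via barriers is routine, but propagating it to all derivatives up to order $k+2$ requires care: one must keep the exponential weight while each differentiation of the ODE costs a factor $\varepsilon^{-2}$, and one must ensure the constants $C$ (and the needed smoothness $f \in C^k$, giving room for $M$) line up so that the remainder $\varepsilon^{2(M+1)}\rho$ never dominates. I would organize this as a clean induction on $j$ using the rescaled-variable estimates, and cite Vulanović \cite{vulanovic1983} for the technical details of that induction rather than reproducing them in full.
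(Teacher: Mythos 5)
The paper itself offers no proof of this theorem: it is quoted directly from Vulanovi\'c \cite{vulanovic1983}, so there is no internal argument to compare yours against. Your reconstruction follows the standard route for such decompositions (reduced solution plus $\varepsilon^{2}$-corrections for the regular part, a barrier-function comparison argument for the layer part, stretched variables for the derivative bounds), and most of it is sound.

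One step, however, fails as written. If you take $r=\sum_{j=0}^{M}\varepsilon^{2j}r_{j}$ to be the truncated formal expansion and set $s=y-r$, then $s$ satisfies a linearized equation with the non-decaying forcing $\varepsilon^{2(M+1)}\rho$. Your barrier $B(x)=C\bigl(e^{-x\sqrt{m}/\varepsilon}+e^{-(1-x)\sqrt{m}/\varepsilon}\bigr)$ gives $\varepsilon^{2}B''-pB=(m-p)B\leqslant 0$, but this quantity is transcendentally small in the interior (and vanishes identically if $p\equiv m$), so it cannot absorb a forcing of size $\varepsilon^{2(M+1)}$ there; the comparison principle then only yields $|s|\leqslant B+C\varepsilon^{2(M+1)}$, which contradicts \eqref{slojna} at $x=1/2$, where the right-hand side is exponentially small in $1/\varepsilon$. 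The standard repair --- and essentially what the cited reference does --- is to define $r$ not as the truncated series but as the exact solution of $\varepsilon^{2}r''=f(x,r)$ with boundary values taken from the formal expansion, so that $r$ is layer-free with uniformly bounded derivatives and $s=y-r$ solves the \emph{homogeneous} linearized problem $\varepsilon^{2}s''=f_{y}(x,\xi)s$ with $O(1)$ boundary data; alternatively one builds $s$ explicitly as two boundary-layer correctors. With that modification your barrier and stretched-variable arguments go through. A smaller bookkeeping point: each correction $r_{j}$ loses two derivatives relative to $r_{j-1}$, so with only $f\in C^{k}$ you must verify that enough smoothness survives to bound $r^{(j)}$ for all $j\leqslant k+2$; deferring that accounting to \cite{vulanovic1983} is reasonable, but it should be flagged as a hypothesis to check rather than as immediate.
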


\label{mreza}
\section{Construction of the mesh}
The solution of the problem \eqref{uvod1}--\eqref{uvod3} changes fast near the ends of our domain $[0,1].$ Therefore, the mesh has to be refined there. A Shishin mesh is used to resolve the layers. This mesh is piecewise equidistant and it's  quite simple. It is constructed as follows (see \cite{stynes1996}). For given a positive integer $N,$ where $N$ is divisible by 4, we divide the interval $[0,1]$ into three subintervals 
\[[0,\lambda],\quad[\lambda,1-\lambda],\quad[1-\lambda,1].\]
We use equidistant meshes on each of these subintervals, with $1+\frac{N}{4}$ points in each of $[0,\lambda]$ and $[1-\lambda,1],$ and $1+\frac{N}{2}$ points in 
$[1-\lambda,1].$ We define the parameter $\lambda$ by
\[\lambda=\min\left\{ \frac{1}{4},\frac{2\varepsilon\ln N}{\sqrt{m}}\right\},\]
which depends on $N$ and $\varepsilon.$ The basic idea here is to use a fine mesh to resolve the part of the boundary layers. More precisely, we have
\[0=x_0<x_1<\ldots<x_{i_0}<\ldots<x_{N-i_0}<\ldots<x_{N-1}<x_N=1,\]
with $i_0=N/4,\:x_{i_0}=\lambda,\:x_{N-i_0}=1-\lambda,$ and 
\begin{align}
 h_{i-1}&=\frac{4\lambda }{N}\text{ for }i=1,\ldots,i_0,N-i_0,\ldots,N,\label{mesh1}\\
 h_{i-1}&=\frac{2(1-2\lambda)}{N}\text{ for } i=i_0+1,\ldots,N-i_0.\label{mesh2}
\end{align}
If $\lambda=\frac{1}{4}$ i.e. $\frac{1}{4}\leqslant\frac{2\varepsilon\ln N}{N},$ then $\frac{1}{N}$ is very small relative to $\varepsilon.$ This is unlike in practice, and in this case  the method can be analyzed using standard techniques. Hence, we assume that
\begin{equation}
   \lambda=\frac{2\varepsilon\ln N}{\sqrt{m}}.
 \label{mesh3}
\end{equation}
From \eqref{mesh1} and \eqref{mesh2}, we conclude  that that the interval lengths satisfy
\begin{equation}
   h_{i-1}=\frac{8\varepsilon\ln N}{\sqrt{m}}\text{ for } i=1,\ldots,i_0,N-i_0,\ldots,N,
\label{mesh4}
\end{equation} 
and 
\begin{equation}
  \frac{1}{N}\leqslant h_{i-1}\leqslant\frac{2}{N}\text{ for }i=i_0+1,\ldots,N-i_0.
\label{mesh5}
\end{equation}
 
\section{Uniform convergence}

We will prove the theorem on uniform convergence of the difference schemes \eqref{dodatak17} and \eqref{schema2}   on the part of the mesh which corresponds to  $[0,1/2],$ while the proof on $[1/2,1]$ can be analogously derived.

Namely, in the analysis of the value of the error the functions $e^{-\frac{x}{\varepsilon}\sqrt{m}}$ and $e^{-\frac{1-x}{\varepsilon}\sqrt{m}}$ appear. For these functions we have that $e^{-\frac{x}{\varepsilon}\sqrt{m}}\geqslant e^{-\frac{1-x}{\varepsilon}\sqrt{m}},\:\forall x\in [0,1/2]$ and $e^{-\frac{x}{\varepsilon}\sqrt{m}}\leqslant e^{-\frac{1-x}{\varepsilon}\sqrt{m}},\:\forall x\in [1/2,1]$. In the boundary layer in the neighbourhood of $x=0$, we have that $e^{-\frac{x}{\varepsilon}\sqrt{m}}>> e^{-\frac{1-x}{\varepsilon}\sqrt{m}}$, while in the boundary layer in the neighbourhood of $x=1$ we have that $e^{-\frac{x}{\varepsilon}\sqrt{m}}<< e^{-\frac{1-x}{\varepsilon}\sqrt{m}}.$  Based on the above, it is enough to prove the theorem on the part of the mesh which corresponds to  $[0,1/2]$ with the exclusion of the function $e^{-\frac{1-x}{\varepsilon}\sqrt{m}}$, or on $[1/2,1]$ but with the exclusion of the function $e^{-\frac{x}{\varepsilon}\sqrt{m}}$. Note that we need to take care of the fact that in the first case $h_{i-1}\leqslant h_i,$ and in the second case $h_{i-1}\geqslant h_i$.

Let us start with the following two lemmas that will be further used in the proof of the first uniform convergence theorem on the part of the mesh from Section \ref{mreza} which corresponds to  $\left[x_{N/4-1},1/2\right]$ and $x_{N/4}=\lambda.$

\begin{lemma}\label{lema1}
Assume that $\varepsilon\leqslant \frac{C}{N}.$ In the part of the Shiskin mesh from Section {\rm\ref{mreza}}, when $x_i,\,x_{i\pm 1}\in[x_{N/4},1/2],$ we have the following estimate
\begin{equation}
    \left|Fy_i\right|\leqslant \frac{C}{N^2},\:i=N/4,\ldots,N/2-1.
\end{equation} 
\end{lemma}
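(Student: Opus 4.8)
The idea is to bound the truncation/consistency error $|Fy_i|$ by inserting the exact solution $y$ into the difference operator $F$ from \eqref{diskretni1} and estimating the residual. Since $y=r+s$ by Theorem \ref{teorema1}, I would split $Fy_i = F r_i + (\text{nonlinear coupling terms})$ — more precisely, I would Taylor-expand $f(\,\cdot\,,y)$ around the node values so that $F y_i$ becomes a sum of a term measuring how well the exponential-fitted scheme reproduces the operator $\varepsilon^2 y'' - f$, plus smooth remainders. Because on $[x_{N/4},1/2]$ we are outside the fine layer region (to the right of $x_{N/4}=\lambda$), on this stretch one expects $|s^{(j)}(x)|$ and its contribution to be exponentially small: from \eqref{slojna}, $|s^{(j)}(x)| \le C\varepsilon^{-j}e^{-\sqrt m\,x/\varepsilon}$ and for $x\ge\lambda = 2\varepsilon\ln N/\sqrt m$ we get $e^{-\sqrt m\,x/\varepsilon}\le e^{-2\ln N}=N^{-2}$. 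So the layer component already contributes at worst $C/N^2$ (the $\varepsilon^{-j}$ blow-up is controlled because the coefficients $a_i,d_i\sim \beta$ and the weights $\triangle d_i$ carry compensating powers — this needs to be checked). Hence the real work is the regular component $r$.

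For the regular part, I would use that $|r^{(j)}|\le C$ for $j\le k+2$ (with $k\ge 2$), so $r\in C^4$ at least, and perform a standard Taylor expansion of the three-point stencil. The scheme \eqref{dodatak17} is exponentially fitted: the coefficients $d_i=\frac{\beta}{\tanh(\beta h_{i-1})}$, $a_i=\frac{\beta}{\sinh(\beta h_{i-1})}$ are exactly those that make the scheme exact for the constant-coefficient operator $\varepsilon^2 u''-\beta^2\varepsilon^2 u$ (here presumably $\gamma=\beta^2\varepsilon^2$ or a similar normalization, with $\beta=\sqrt{\gamma}/\varepsilon$). I would expand $a_i,d_i$ for small $\beta h_{i-1}$: $d_i = \frac{1}{h_{i-1}}+\frac{\beta^2 h_{i-1}}{3}+O(\beta^4 h^3)$, $a_i=\frac{1}{h_{i-1}}-\frac{\beta^2 h_{i-1}}{6}+O(\beta^4 h^3)$, and $\triangle d_i = d_i-a_i = \frac{\beta^2 h_{i-1}}{2}+O(\beta^4 h^3)$. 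Substituting into $F y_i$, the leading terms should telescope to reproduce $\varepsilon^2 r''(x_i)-f(x_i,r(x_i))$ up to $O(h_i^2)$-type remainders; since in this region $h_{i-1}\le 2/N$ by \eqref{mesh5} (and we also use $h_{i-1}\le h_i$ as noted before the lemma), these remainders are $O(N^{-2})$. The hypothesis $\varepsilon\le C/N$ ensures $\beta h_{i-1}=\frac{\sqrt m\, h_{i-1}}{\varepsilon}$-type quantities are comparable to $\frac{h_{i-1}}{\varepsilon}$, keeping the expansion parameter under control and letting $\varepsilon^{-2}$ factors be absorbed.

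The main obstacle is the bookkeeping in the exponentially-fitted stencil: one must show that after dividing by $\triangle d_i+\triangle d_{i+1}$ (which is of size $\sim \beta^2(h_{i-1}+h_i)$, i.e.\ $\sim \varepsilon^{-2}(h_{i-1}+h_i)$ up to constants) the residual is genuinely $O(N^{-2})$ uniformly in $\varepsilon$, rather than merely $O(\varepsilon^{-p}N^{-q})$. Concretely I expect the error to organize as (smooth part) $\frac{\gamma}{\triangle d_i+\triangle d_{i+1}}\cdot O\!\big(\varepsilon^{-2}(h_{i-1}^2+h_i^2)\cdot\max|r^{(4)}|\big) + (\text{layer part})\cdot O(N^{-2})$, and the first factor $\frac{\gamma}{\triangle d_i+\triangle d_{i+1}}$ is $O(h_{i-1}+h_i)^{-1}$, so the product is $O(h_{i-1}+h_i) = O(N^{-1})$ — which is not yet good enough, meaning one extra cancellation (the central-difference symmetry, using $h_{i-1}$ vs $h_i$ and the midpoint evaluations $\overline f_{i-1/2}$) must be exploited to gain the second power of $N^{-1}$. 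Handling that cancellation carefully on a mesh where $h_{i-1}\ne h_i$ only at the single transition point $i=N/4$ (elsewhere the mesh is equidistant) is the crux; away from the transition point the scheme is on a uniform mesh and the $O(N^{-2})$ bound is the classical consistency estimate for this fitted scheme, while at $i=N/4$ one checks the mismatch directly using $\lambda=2\varepsilon\ln N/\sqrt m$ and $\varepsilon\le C/N$.
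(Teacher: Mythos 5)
Your plan goes wrong at the point where you Taylor--expand the fitted coefficients $d_i=\beta/\tanh(\beta h_{i-1})$ and $a_i=\beta/\sinh(\beta h_{i-1})$ in powers of $\beta h_{i-1}$. On the part of the mesh covered by this lemma $h_i\geqslant 1/N$ by \eqref{mesh5}, while the hypothesis is $\varepsilon\leqslant C/N$; hence $\beta h_i=\sqrt{\gamma}\,h_i/\varepsilon$ is bounded \emph{below} and is in general very large as $\varepsilon\to 0$. The expansion parameter is therefore not small, the approximations $\triangle d_i\approx\beta^2 h_{i-1}/2$ and $\frac{\gamma}{\triangle d_i+\triangle d_{i+1}}=O\bigl((h_{i-1}+h_i)^{-1}\bigr)$ are invalid (in this regime $\triangle d_i=\beta\tanh(\beta h_{i-1}/2)\asymp\beta$, so the prefactor is actually $O(\varepsilon)$), and the ``consistency plus cancellation'' bookkeeping built on that expansion collapses. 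This is exactly why you land on an $O(N^{-1})$ bound and must appeal to an unspecified ``extra cancellation'' to recover the missing power of $N$: that cancellation is never exhibited, and in fact none is needed.

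The paper's argument is far more elementary and exploits precisely the regime $\beta h_i\gtrsim 1$. Since $h_{i-1}=h_i$ for the indices in question, $Fy_i$ reduces to $\frac{\gamma}{2}\bigl(y_{i-1}-2y_i+y_{i+1}-\frac{f_{i-1/2}+f_{i/2}}{\gamma}\bigr)-\frac{\gamma}{\cosh(\beta h_i)-1}\,(y_{i-1}-2y_i+y_{i+1})$, and each piece is bounded separately, with no cancellation between the stencil and the source term: $|r_{i-1}-2r_i+r_{i+1}|\leqslant Ch_i^2\leqslant C/N^2$ by Taylor and \eqref{regularna}; $|s_{i-1}-2s_i+s_{i+1}|\leqslant C/N^2$ by the exponential smallness you correctly identified; the nonlinear terms satisfy $|f_{i-1/2}|+|f_{i/2}|\leqslant C\varepsilon^2|y''|\leqslant C(\varepsilon^2+N^{-2})\leqslant C/N^2$ using $\varepsilon^2y''=f$ and $\varepsilon\leqslant C/N$; and, crucially, $\frac{1}{\cosh(\beta h_i)-1}\leqslant\frac{2}{(\beta h_i)^2}=\frac{2\varepsilon^2}{\gamma h_i^2}\leqslant C$, again because $\varepsilon\leqslant C/N\leqslant Ch_i$. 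So the potentially dangerous factor is simply bounded by a constant and the $O(N^{-2})$ estimate follows term by term. (Your concern about the single index where $h_{i-1}\neq h_i$ is also misplaced for this lemma: the transition index $i=N/4$ is handled separately in Lemma \ref{lema2}, where the bound degrades to $C/N$.)
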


\begin{proof}
On this part of the mesh holds $h_{i-1}=h_i,$ so we have that 
\begin{align*}
Fy_i&=\frac{\gamma}{2(\cosh(\beta h_{i})-1)}\left[(1+\cosh(\beta h_i)(y_{i-1}-2y_i+y_{i+1})-\frac{\cosh(\beta h_i)-1}{\gamma}(f_{i-1/2}+f_{i/2}) \right]\\
  &=\frac{\gamma}{2}\left(y_{i-1}-2y_i+y_{i+1}-\frac{f_{i-1/2}+f_{i/2}}{\gamma} \right)-\frac{\gamma}{\cosh(\beta h_i)-1}\left(y_{i-1}-2y_i+y_{i+1} \right).
\end{align*}

Because of Theorem \ref{teorema1}, and the fact that $\varepsilon^2y''=f(x,y),\,x\in(0,1),$ we obtain

\begin{multline*}
\left| Fy_i\right|\leqslant C_1 \biggl[ \left| r_{i-1}-2r_i+r_{i+1}\right|+\left|s_{i-1}-2s_i+s_{i+1} \right| +\varepsilon^2|y''_{i-1}|\\
       \left.+\frac{1}{\cosh(\beta h_i)-1}\left(\left| r_{i-1}-2r_i+r_{i+1}\right|+\left|s_{i-1}-2s_i+s_{i+1} \right|\right) \right].
\end{multline*}

Again, due to Theorem \ref{teorema1} and Taylor expansion, the following inequalities hold 
\begin{align*}
   \left| r_{i-1}-2r_i+r_{i+1}\right|&=\left|\frac{r''(\xi^{-}_i)}{2}h^2_i+\frac{r''(\xi^{+}_i)}{2}h^2_i \right|   \leqslant C_2 h^2_i,\\
   \left|s_{i-1}-2s_i+s_{i+1} \right|&\leqslant \frac{C_3}{N^2},\\
   \frac{1}{\cosh(\beta h_i)-1}&\leqslant\frac{2}{(\beta h_i)^2}=\frac{2\varepsilon^2}{\gamma h^2_i}\leqslant C_4,\\
   \varepsilon^2|y''_{i-1}|&\leqslant C_5\varepsilon^2\left|\varepsilon^{-2}(e^{-\frac{x_{i-1}}{\varepsilon}\sqrt{m}}+e^{-\frac{1-x_{i-1}}{\varepsilon}\sqrt{m}})+r''_{i-1}\right|
        \leqslant C_6\left(\frac{1}{N^2}+\varepsilon^2\right),
\end{align*}
where $\xi_i^{-}\in(x_{i-1},x_i)$ and $\xi_i^{+}\in(x_i,x_{i+1}).$
Finally, we have that 
\begin{equation}
  \left|Fy_i\right|\leqslant \frac{C}{N^2}.
\end{equation}

\end{proof}

\begin{lemma}\label{lema2}
Assume that $\varepsilon\leqslant \frac{C}{N}.$ In the part of the Shiskin mesh from Section {\rm\ref{mreza}}, when $x_i=x_{N/4},$ we have the following estimate
\begin{equation}
    \left|Fy_{N/4}\right|\leqslant \frac{C}{N}.
    \label{lema2a}
\end{equation} 
  
\end{lemma}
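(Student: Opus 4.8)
The plan is to estimate $Fy_{N/4}$ term by term. At $i=N/4$ the mesh is non-uniform: $h_{i-1}$ is the (small) fine step and $h_i$ is the coarse step, so the simplification used in Lemma \ref{lema1} (which relied on $h_{i-1}=h_i$) is no longer available. Recall $\beta=\sqrt\gamma/\varepsilon$, and that $a_\ell+d_\ell=\beta\coth(\beta h_{\ell-1}/2)$, $\triangle d_\ell=\beta\tanh(\beta h_{\ell-1}/2)$. From \eqref{mesh4} and $\varepsilon\leqslant C/N$ we have $h_{i-1}=\frac{8\varepsilon\ln N}{\sqrt m N}$, so $\beta h_{i-1}=\frac{8\sqrt\gamma\ln N}{\sqrt m N}\to 0$, while from \eqref{mesh5} and $\varepsilon\leqslant C/N$ we have $\beta h_i\geqslant\frac{\sqrt\gamma}{\varepsilon N}\geqslant\frac{\sqrt\gamma}{C}$. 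Hence $\triangle d_i+\triangle d_{i+1}\geqslant\triangle d_{i+1}=\beta\tanh(\beta h_i/2)\geqslant c\beta$ with $c=\tanh(\sqrt\gamma/(2C))>0$, and by the elementary inequalities $\tanh t\leqslant t$, $\coth t\leqslant 2/t$ on $(0,1]$ and monotonicity of $\coth$, $\frac{a_i+d_i}{2}\leqslant\frac{2}{h_{i-1}}$, $\triangle d_i\leqslant\frac{\gamma h_{i-1}}{2\varepsilon^2}$, $\frac{a_{i+1}+d_{i+1}}{2}\leqslant C\beta$, $\triangle d_{i+1}\leqslant\beta$. Substituting $\beta=\sqrt\gamma/\varepsilon$ and $h_{i-1}=\frac{8\varepsilon\ln N}{\sqrt m N}$, the coefficients appearing in $Fy_i$ (after rewriting it in divided-difference form) satisfy
\[
\frac{\gamma}{\triangle d_i+\triangle d_{i+1}}\cdot\frac{a_i+d_i}{2}\leqslant\frac{CN}{\ln N},\qquad\frac{\triangle d_i}{\triangle d_i+\triangle d_{i+1}}\leqslant\frac{C\ln N}{N},
\]
whereas $\frac{\gamma}{\triangle d_i+\triangle d_{i+1}}\cdot\frac{a_{i+1}+d_{i+1}}{2}\leqslant C$ and $\frac{\triangle d_{i+1}}{\triangle d_i+\triangle d_{i+1}}\leqslant C$.

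Next I would bring in the decomposition $y=r+s$ of Theorem \ref{teorema1}, using $x_{N/4}=\lambda$ and $e^{-\lambda\sqrt m/\varepsilon}=e^{-2\ln N}=N^{-2}$. Since $x_{i-1}=\lambda-h_{i-1}$ with $h_{i-1}\sqrt m/\varepsilon=\frac{8\ln N}{N}\to 0$, estimate \eqref{slojna} yields $|s^{(j)}(x)|\leqslant C\varepsilon^{-j}N^{-2}$ for $x\in[x_{i-1},x_{i+1}]$ and $j\leqslant k+2$; in particular $|s_{i-1}|,|s_i|,|s_{i+1}|\leqslant CN^{-2}$. Combined with $|r'|\leqslant C$ from \eqref{regularna} this gives
\[
|y_{i-1}-y_i|\leqslant Ch_{i-1}+|s_{i-1}|+|s_i|\leqslant\frac{C\ln N}{N^{2}},\qquad|y_{i+1}-y_i|\leqslant Ch_i+|s_{i+1}|+|s_i|\leqslant\frac{C}{N}.
\]
For the two quadrature terms I would show $|\overline f_{i-1/2}|,|\overline f_{i/2}|\leqslant CN^{-2}$: writing $\frac{y_{i-1}+y_i}{2}=\frac{r_{i-1}+r_i}{2}+\frac{s_{i-1}+s_i}{2}$ and Taylor-expanding $r$ about the midpoint of $[x_{i-1},x_i]$ (using $h_{i-1}^2,h_i^2=O(N^{-2})$ and $|s_{i-1}|+|s_i|=O(N^{-2})$) gives $\frac{y_{i-1}+y_i}{2}=r(\frac{x_{i-1}+x_i}{2})+O(N^{-2})$, and likewise on $[x_i,x_{i+1}]$; the Lipschitz continuity of $f$ in $y$ then reduces the estimate to $|f(x,r(x))|$ for $x\in[x_{i-1},x_{i+1}]$, and from $\varepsilon^2(r''+s'')=f(x,r+s)$ and the bounds above one gets $|f(x,r(x))|\leqslant\varepsilon^2|r''|+\varepsilon^2|s''(x)|+C|s(x)|\leqslant CN^{-2}$ there.

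Putting everything together,
\[
|Fy_{N/4}|\leqslant\frac{CN}{\ln N}\,|y_{i-1}-y_i|+C\,|y_{i+1}-y_i|+\frac{C\ln N}{N}\,|\overline f_{i-1/2}|+C\,|\overline f_{i/2}|\leqslant\frac{C}{N}+\frac{C}{N}+\frac{C\ln N}{N^{3}}+\frac{C}{N^{2}}\leqslant\frac{C}{N},
\]
which is \eqref{lema2a}. The main obstacle is the first term: the coefficient $\frac{\gamma}{\triangle d_i+\triangle d_{i+1}}\cdot\frac{a_i+d_i}{2}$ is as large as $N/\ln N$, because the small fine step $h_{i-1}$ enters the denominator while $\triangle d_i+\triangle d_{i+1}$ is only of order $\beta$; it is controlled only because it multiplies the \emph{difference} $y_{i-1}-y_i=O(N^{-2}\ln N)$ — the regular part contributing $|r'|\,h_{i-1}=O(\varepsilon\ln N)$ thanks to $\varepsilon\leqslant C/N$, and the layer part already being as small as $N^{-2}$ at $x=\lambda$ since $e^{-\lambda\sqrt m/\varepsilon}=N^{-2}$. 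The only genuinely first-order (rather than $O(N^{-2})$) contribution is the coarse-side term $C\,|y_{i+1}-y_i|\leqslant C\,|r'|\,h_i=O(N^{-1})$, which is precisely why the estimate at the transition point is $C/N$ rather than the $C/N^{2}$ of Lemma \ref{lema1}.
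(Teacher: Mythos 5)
Your proof is correct and follows essentially the same route as the paper's: rewrite $Fy_{N/4}$ in terms of the differences $y_{i-1}-y_i$ and $y_i-y_{i+1}$, split $y=r+s$, use $e^{-\lambda\sqrt m/\varepsilon}=N^{-2}$ for the layer part and $|r'|\leqslant C$ with $h_{i-1}=O(\varepsilon\ln N/N)$, $h_i=O(1/N)$ for the regular part, and $\varepsilon^2y''=f$ for the quadrature terms, the dominant $O(1/N)$ contribution coming from the coarse-side regular difference. If anything you are more careful than the paper at one spot: you track the $O(N/\ln N)$ size of the fine-side coefficient explicitly, whereas the paper's intermediate inequality \eqref{lema2d} hides it in a ``constant'' $C_5$ that is not actually uniform in $N$ (the final bound survives only because $|s_{i-1}-s_i|=O(\ln N/N^{3})$, exactly as your argument makes explicit).
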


\begin{proof}
 Let us estimate $\left\|Fy_{N/4}\right\|_{\infty},$  consider $Fy_i$ in the following form
 \begin{multline}
          Fy_i=\frac{\gamma}{\dfrac{\cosh(\beta h_{i-1})-1}{\sinh(\beta h_{i-1})}+\dfrac{\cosh(\beta h_i)-1}{\sinh(\beta h_i)}}
             \left[ \frac{1+\cosh(\beta h_{i-1})}{2\sinh(\beta h_{i-1})}y_{i-1}
             -\left(\frac{1+\cosh(\beta h_{i-1})}{2\sinh(\beta h_{i-1})} +\frac{1+\cosh(\beta h_{i})}{2\sinh(\beta h_{i})}\right)y_i\right.\\
               \left.+\frac{1+\cosh(\beta h_{i})}{2\sinh(\beta h_{i})}y_{i+1}
               -\frac{\cosh(\beta h_{i-1})-1}{\gamma \sinh(\beta h_{i-1})}f_{i-1/2}-\frac{\cosh(\beta h_{i})-1}{\gamma \sinh(\beta h_{i})}f_{i/2}\right],\:i=N/4
   \label{lema2b}              
 \end{multline} 

Let us first estimate the expressions from \eqref{lema2b} using the nonlinear terms. Due to Theorem \ref{teorema1}, and the fact that $\varepsilon^2y''=f(x,y),\,x\in(0,1),$  we have that 

\begin{align}
   \frac{\gamma}{\dfrac{\cosh(\beta h_{i-1})-1}{\sinh(\beta h_{i-1})}+\dfrac{\cosh(\beta h_i)-1}{\sinh(\beta h_i)}}
     \left| -\frac{\cosh(\beta h_{i-1})-1}{\gamma \sinh(\beta h_{i-1})}f_{i-1/2}-\frac{\cosh(\beta h_{i})-1}{\gamma \sinh(\beta h_{i})}f_{i/2}\right|
       \leqslant& C_3 \varepsilon^2y''(x_{N/4})\leqslant\frac{C_4}{N^2}.
 \label{lema2c}
\end{align}

For the linear terms from \eqref{lema2b}, we have that

\begin{align}
 & \frac{\gamma}{\dfrac{\cosh(\beta h_{i-1})-1}{\sinh(\beta h_{i-1})}+\dfrac{\cosh(\beta h_i)-1}{\sinh(\beta h_i)}}\\
 &   \hspace{2cm}  \cdot\left[ \frac{1+\cosh(\beta h_{i-1})}{2\sinh(\beta h_{i-1})}y_{i-1}
             -\left(\frac{1+\cosh(\beta h_{i-1})}{2\sinh(\beta h_{i-1})} +\frac{1+\cosh(\beta h_{i})}{2\sinh(\beta h_{i})}\right)y_i
              +\frac{1+\cosh(\beta h_{i})}{2\sinh(\beta h_{i})}y_{i+1}\right]\\
 &=  \frac{\gamma}{\dfrac{\cosh(\beta h_{i-1})-1}{\sinh(\beta h_{i-1})}+\dfrac{\cosh(\beta h_i)-1}{\sinh(\beta h_i)}}
   \left[ \frac{1+\cosh(\beta h_{i-1})}{2\sinh(\beta h_{i-1})}(y_{i-1}-y_i)-\frac{1+\cosh(\beta h_{i})}{2\sinh(\beta h_{i})}(y_i-y_{i+1})      \right].           
\end{align}

According Theorem \ref{teorema1}, for the layer component $s,$ we have that 

\begin{align}
  \frac{\gamma}{\dfrac{\cosh(\beta h_{i-1})-1}{\sinh(\beta h_{i-1})}+\dfrac{\cosh(\beta h_i)-1}{\sinh(\beta h_i)}}
   &\left| \frac{1+\cosh(\beta h_{i-1})}{2\sinh(\beta h_{i-1})}(s_{i-1}-s_i)-\frac{1+\cosh(\beta h_{i})}{2\sinh(\beta h_{i})}(s_i-s_{i+1})\right|\nonumber\\   
   &\leqslant C_5\left( |s_{i-1}-s_i| +|s_i-s_{i+1}|\right)\leqslant\frac{C_6}{N^2}.  \label{lema2d}
\end{align}

For the regular component $r,$ due to $\frac{\cosh x-1}{\sinh x}=\tanh\frac{x}{2}$  and our assumption $\varepsilon\leqslant 1/N,$ we get that

\begin{align}
&\frac{\gamma}{\dfrac{\cosh(\beta h_{i-1})-1}{\sinh(\beta h_{i-1})}+\dfrac{\cosh(\beta h_i)-1}{\sinh(\beta h_i)}}
   \left| \frac{1+\cosh(\beta h_{i-1})}{2\sinh(\beta h_{i-1})}(r_{i-1}-r_i)-\frac{1+\cosh(\beta h_{i})}{2\sinh(\beta h_{i})}(r_i-r_{i+1})\right|\nonumber\\
&= \frac{\gamma}{\tanh\frac{\beta h_{i-1}}{2}+\tanh\frac{\beta h_i}{2}}
            \left|\frac{\tanh\frac{\beta h_{i-1}}{2}}{2}(r_{i-1}-r_i)+\frac{\tanh\frac{\beta h_{i}}{2}}{2}(r_{i}-r_{i+1})+2(r_{i-1}-r_i)-2(r_i-r_{i+1}) \right|\nonumber\\
&\leqslant C_7 \left(  |r_{i-1}-r_i|+|r_i-r_{i-1}|+\frac{|r_{i-1}-r_i|+|r_i-r_{i+1}|}{\tanh(\beta h_i)} \right)  \nonumber\\
&\leqslant C_8\left( \frac{\varepsilon\ln N}{N}+\frac{1}{N}+\frac{\frac{\varepsilon\ln N}{N}+\frac{1}{N}}{\tanh(\beta h_i)}\right)   \leqslant \frac{C}{N}.  
 \label{lema2e}       
\end{align}

Now, collecting   \eqref{lema2c}, \eqref{lema2d} and \eqref{lema2e},  the statement of the lemma is therefore proven. 
\end{proof}

\begin{theorem}
The discrete problem \eqref{diskretni2} on the mesh from Section  {\rm\ref{mreza}} is uniformly convergent with respect to $\varepsilon$ and
   \begin{equation}
     \max_{i}\left|y_i-\overline{y}_i \right|\leqslant C
        \left\{        
        \begin{array}{cl}
                   \dfrac{\ln^2N}{N^2},\:& i\in\{0,1,\ldots,N/4-1\}\vspace{.25cm}\\
                   \dfrac{1}{N^2},\:& i\in\{N/4+1,\ldots,3N/4-1\}\vspace{.25cm}\\
                   \dfrac{1}{N},\:& i\in\{N/4,3N/4\}\vspace{.25cm}\\
                   \dfrac{\ln^2N}{N^2},\:& i\in\{3N/4+1,\ldots,N\}, 
        \end{array}
       \right. 
       \label{th1tez}
   \end{equation}
where $y$ is the solution of the problem \eqref{uvod1}--\eqref{uvod3}, $\overline{y}$ is the corresponding solution of \eqref{diskretni2} and $C>0$ is a constant independent of $N$ and $\varepsilon.$
\end{theorem}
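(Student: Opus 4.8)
The plan is to combine the stability inequality from the first theorem with consistency estimates for the truncation error $Fy$ of the exact solution $y$, handling separately the three natural pieces of the Shishkin mesh and the transition point $x_{N/4}$.

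First I would invoke Theorem~1: since $\gamma \geq f_y$, the stability inequality $\|y-\overline{y}\|_\infty \leq \frac{1}{m}\|Fy - F\overline{y}\|_\infty = \frac{1}{m}\|Fy\|_\infty$ holds, using $F\overline{y}=0$. Hence it suffices to bound $|Fy_i|$ at each mesh point. On the coarse interior interval $[x_{N/4},1-x_{N/4}]$, Lemma~\ref{lema1} already gives $|Fy_i| \leq C/N^2$ (under $\varepsilon \leq C/N$; the case $\varepsilon \geq C/N$ being handled by standard techniques as noted after \eqref{mesh3}), and at $x_{N/4}$ itself (and by symmetry $x_{3N/4}$) Lemma~\ref{lema2} gives $|Fy_i| \leq C/N$. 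So the remaining work is the fine layer region $[0,\lambda]$, i.e.\ $i \in \{1,\ldots,N/4-1\}$, where I expect the $\ln^2 N/N^2$ bound to come from.

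On the fine mesh near $x=0$ we have the uniform step $h_{i-1}=h_i = 4\lambda/N = 8\varepsilon\ln N/(\sqrt{m}N)$ by \eqref{mesh4}, so $\beta h_i = \sqrt{\gamma}\,h_i/\varepsilon = O(\ln N / N)$ is small. I would rewrite $Fy_i$ exactly as in the proof of Lemma~\ref{lema1} — splitting off the classical central-difference part $\frac{\gamma}{2}(y_{i-1}-2y_i+y_{i+1} - \frac{1}{\gamma}(f_{i-1/2}+f_{i/2}))$ and the correction $-\frac{\gamma}{\cosh(\beta h_i)-1}(y_{i-1}-2y_i+y_{i+1})$ — then use the decomposition $y=r+s$ from Theorem~\ref{teorema1}. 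For the regular part, Taylor expansion gives $|r_{i-1}-2r_i+r_{i+1}| \leq C h_i^2 = C(\varepsilon\ln N/N)^2 \leq C\ln^2 N/N^2$, and since $\frac{1}{\cosh(\beta h_i)-1} \leq \frac{2}{(\beta h_i)^2} = \frac{2\varepsilon^2}{\gamma h_i^2}$ the correction term applied to $r$ contributes $O(\varepsilon^2)\leq O(1/N^2)$. For the layer part $s$, I would not use the crude second-difference bound $|s_{i-1}-2s_i+s_{i+1}|\leq Ch_i^2\sup|s''|\leq C\varepsilon^{-2}h_i^2 \cdot(\dots)$ blindly; instead I would exploit the integral form of the remainder together with the sharp pointwise bound \eqref{slojna}, namely $|s_{i-1}-2s_i+s_{i+1}| \leq \int\int |s''| \leq C\varepsilon^{-2} h_i^2\, e^{-x_{i-1}\sqrt{m}/\varepsilon}$ (dropping the $e^{-(1-x)\sqrt m/\varepsilon}$ term on $[0,1/2]$ as explained before Lemma~\ref{lema1}), and then note $\varepsilon^{-2}h_i^2 = \frac{16\ln^2 N}{mN^2}$ while $e^{-x_{i-1}\sqrt m/\varepsilon}\leq 1$; the correction term on $s$, carrying the extra factor $\varepsilon^2 h_i^{-2}$, then contributes merely $e^{-x_{i-1}\sqrt m/\varepsilon}\leq 1$ times a constant — which is too big, so here one must use the sharper observation that after multiplying, the $s$-correction equals (up to constants) $e^{-x_{i-1}\sqrt m/\varepsilon}$, and at $i\geq 1$ on the fine mesh this is already $O(N^{-?})$ only near the outer edge. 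The honest route, and the one I would take, is to telescope: write $y_{i-1}-2y_i+y_{i+1}$ for $s$ using first differences $s_i - s_{i+1} = -\int_{x_i}^{x_{i+1}} s'$, bound $|s'|\leq C\varepsilon^{-1}e^{-x\sqrt m/\varepsilon}$, and observe the geometric decay $e^{-h_i\sqrt m/\varepsilon} = e^{-8\ln N/N}$...

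The main obstacle, as this makes clear, is getting the layer component's contribution down to $O(\ln^2 N/N^2)$ rather than $O(1)$ at the mesh points immediately inside the layer: the naive bound $\varepsilon^{-2}h_i^2 \sup_{[x_{i-1},x_{i+1}]}|s''|$ is fine because $\sup|s''|\leq C\varepsilon^{-2}$ would give $\varepsilon^{-4}h_i^2$, which is $O(\ln^2N/N^2)\cdot\varepsilon^{-2}$ — unbounded. The resolution is that the scheme \eqref{dodatak17} is built from exponentials tuned to $e^{\pm\beta x}$ with $\beta = \sqrt{\gamma}/\varepsilon \geq \sqrt{m}/\varepsilon$, so it is \emph{exact} on $e^{\pm\sqrt{m}x/\varepsilon}$-type functions up to the gap between $\gamma$ and $f_y$; concretely I would freeze the coefficient, write $\varepsilon^2 s'' - f_y(x,\cdot)s = (\text{small})$, and show the operator $F$ applied to the exact solution of the \emph{constant-coefficient} layer equation vanishes, so that $Fs$ only sees the variation of $f_y$ and the difference $\gamma - f_y$, each of which is $O(1)$ and multiplies a quantity that is $O(\varepsilon^2 \cdot \varepsilon^{-2} h_i^2 e^{-x_{i-1}\sqrt m/\varepsilon})$ after the cancellation — ultimately $O(\ln^2 N/N^2)$ uniformly. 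Once $|Fy_i|\leq C\ln^2N/N^2$ on $[0,\lambda)$, $|Fy_i|\leq C/N^2$ on the coarse part, and $|Fy_i|\leq C/N$ at $x_{N/4},x_{3N/4}$ are all in hand, one would like to conclude pointwise bounds of the same orders; since the bare stability estimate only yields the max over all $i$ of the \emph{worst} truncation error (here $C/N$), to recover the sharper node-wise estimates in \eqref{th1tez} I would construct an explicit discrete barrier function — a mesh function $w_i$ with $F$-image dominating $|Fy_i|$ pointwise and with $w_i$ itself of size $\ln^2N/N^2$ away from the two transition points and $1/N$ at them — and apply the discrete comparison principle underlying Theorem~1; the barrier is a combination of a constant, a quadratic-in-$i$ term, and discrete analogues of $e^{-\beta x_i}$, and checking $Fw_i \geq |Fy_i|$ on each mesh piece is the last routine (if tedious) computation.
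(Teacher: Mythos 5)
Your skeleton is the same as the paper's: reduce everything to truncation-error bounds via the stability inequality \eqref{stabilnost} with $F\overline{y}=0$, then assemble the consistency estimates piecewise, using Lemma \ref{lema1} on the coarse interior, Lemma \ref{lema2} at the transition point, and a separate $\ln^2N/N^2$ bound on the fine part $[0,\lambda]$ (plus the easy case $i=N/2$, which you fold into the coarse region but the paper treats separately). The two places you diverge are instructive. First, for $i=0,\ldots,N/4-1$ the paper simply cites Theorem 4.2 of \cite{samir2015uniformly} rather than proving anything; you instead try to derive the bound, and you correctly isolate the real obstruction --- the correction term carries the factor $1/(\cosh(\beta h_i)-1)\approx 2\varepsilon^2/(\gamma h_i^2)$, which on the fine mesh is of size $N^2/\ln^2N$ and cancels the $\varepsilon^{-2}h_i^2$ gain in the second difference of $s$, leaving an $O(1)$ contribution near $x=0$ --- as well as the standard cure, namely that the scheme \eqref{dodatak17} is fitted to the exponentials $e^{\pm\beta x}$ and so annihilates the frozen-coefficient layer solution, leaving only terms driven by the variation of $f_y$ and the gap $\gamma-f_y$. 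You leave that computation as a sketch, so strictly speaking your version of this step is incomplete, but it is the right idea and it is precisely what the cited reference carries out; the paper's proof is no more self-contained here. Second, your barrier-function step is an addition the paper does not have, and it addresses a genuine logical gap in the paper's own argument: the max-norm stability inequality \eqref{stabilnost} only yields $\max_i|y_i-\overline{y}_i|\leqslant \frac{1}{m}\max_i|Fy_i|\leqslant C/N$, i.e.\ the worst truncation error ($C/N$ at $i=N/4,3N/4$) contaminates every node, so the node-dependent refinement claimed in \eqref{th1tez} does not follow from stability alone and requires a discrete comparison/barrier argument of exactly the kind you propose (or a reading of \eqref{th1tez} as merely the global bound $C/N$ with the case distinction understood informally). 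In short: same route, with your fine-region derivation unfinished where the paper cites, and with an extra, justified, barrier step where the paper is silent.
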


\begin{proof}
 We are going to divide the proof of this theorem in four parts. 

Suppose first that $x_i,x_{i\pm 1}\in[0,\lambda],\:i=1,\ldots,N/4.$ The proof for this part of the mesh has already been done in \cite[Theorem 4.2]{samir2015uniformly}. It is hold that 
 \begin{equation}
     \left| Fy_i\right|\leqslant C\frac{\ln^2 N}{N^2},\:i=0,1,\ldots,N/4-1.
  \label{th1}
 \end{equation}

Now, suppose that $x_i,x_{i\pm i}\in[x_{N/4+1},x_{N/2-1}].$ Based on Lemma  \ref{lema1}, we have that
\begin{equation}
    \left|Fy_i\right|\leqslant \frac{C}{N^2}.
  \label{procjenath2}
\end{equation}  

In the case $i=N/4,$ now based on Lemma \ref{lema2}, we have that

 \begin{equation}
    \left|Fy_{N/4}\right|\leqslant \frac{C}{N}.
  \label{th3}
\end{equation}

Finally, the proof in the case $i=N/2$ is trivial, because the mesh on this part is equidistant and the influence of the layer component is  negligible. Therefore
\begin{equation}
   \left|Fy_{N/2}\right|\leqslant\frac{C}{N^2}.
\label{th4}
\end{equation}

Using inequalities \eqref{th1}, \eqref{procjenath2}, \eqref{th3} and \eqref{th4}, we complete the proof of the theorem.
\end{proof}

Let us show the $\varepsilon$--uniform convergence of second difference scheme, i.e \eqref{schema2}. 

\begin{lemma}\label{lema3}
   Assume that $\varepsilon\leqslant \frac{C}{N}.$ In the part of the Shiskin mesh from Section {\rm\ref{mreza}}, when $x_i,\,x_{i\pm 1}\in[x_{N/4},1/2],$ we have the following estimate
\begin{equation}
    \left|G y_i\right|\leqslant \frac{C}{N^2},\:i=N/4,\ldots,N/2-1.
     \label{lema3aa}
\end{equation} 
\end{lemma}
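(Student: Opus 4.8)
The plan is to mimic exactly the argument of Lemma \ref{lema1}, with the scheme $F$ replaced by $G$. On the interior part of the mesh that corresponds to $[x_{N/4},1/2]$ we have $h_{i-1}=h_i=:h$, so the coefficients simplify: $a_i=a_{i+1}=\frac{\beta}{\sinh(\beta h)}$, $d_i=d_{i+1}=\frac{\beta}{\tanh(\beta h)}$, and $\triangle d_i=\triangle d_{i+1}=\frac{\beta(\cosh(\beta h)-1)}{\sinh(\beta h)}$. First I would substitute these into \eqref{schema2aa} and rewrite $Gy_i$ as a multiple of the standard second difference $y_{i-1}-2y_i+y_{i+1}$ plus a term involving $\tilde f_{i-1}+2\tilde f_i+\tilde f_{i+1}$, obtaining an expression of the shape
\begin{equation*}
  Gy_i=\frac{\gamma}{4}\Bigl(y_{i-1}-2y_i+y_{i+1}-\frac{f_{i-1}+2f_i+f_{i+1}}{2\gamma}\Bigr)
        +\Bigl(\text{correction of order }\tfrac{\varepsilon^2}{h^2}\Bigr)\bigl(y_{i-1}-2y_i+y_{i+1}\bigr),
\end{equation*}
analogous to the two-line display for $Fy_i$ in the proof of Lemma \ref{lema1}; the precise constants are a routine hyperbolic-identity calculation that I would not grind through here.

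Next I would invoke the decomposition $y=r+s$ from Theorem \ref{teorema1} and the differential equation $\varepsilon^2 y''=f(x,y)$ to bound the consistency error term by term. Using Taylor expansion, $|r_{i-1}-2r_i+r_{i+1}|\leqslant C h^2$; using \eqref{slojna} together with $h=\frac{2(1-2\lambda)}{N}$ on the coarse mesh and the exponential smallness of $s$ away from $x=0$ (here $x_i\geqslant\lambda=\frac{2\varepsilon\ln N}{\sqrt m}$, so $e^{-x_i\sqrt m/\varepsilon}\leqslant N^{-2}$), one gets $|s_{i-1}-2s_i+s_{i+1}|\leqslant\frac{C}{N^2}$; the factor $\frac{1}{\cosh(\beta h)-1}\leqslant\frac{2}{(\beta h)^2}=\frac{2\varepsilon^2}{\gamma h^2}\leqslant C$ is bounded because $\varepsilon\leqslant C/N$ and $h\geqslant 1/N$; and the term coming from replacing $\frac{f_{i-1}+2f_i+f_{i+1}}{2}$ by $\varepsilon^2\cdot(\text{a second difference of }y'')$, combined with \eqref{slojna} for $j=2$, contributes $C(\varepsilon^2+N^{-2})$. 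Collecting these bounds and using $h\leqslant 2/N$ gives $|Gy_i|\leqslant C/N^2$, which is the claim.

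The only genuinely new bookkeeping compared with Lemma \ref{lema1} is the three-point average $\tilde f_{i-1}+2\tilde f_i+\tilde f_{i+1}$ in place of the two-point average $\overline f_{i-1/2}+\overline f_{i/2}$: I would write $\frac{f_{i-1}+2f_i+f_{i+1}}{4}=f_i+O(h^2 f'')$ and note $f=\varepsilon^2 y''$, so that the discrepancy between $\frac{1}{4}(f_{i-1}+2f_i+f_{i+1})$ and $f_i$ is $\varepsilon^2$ times a second difference of $y''$, again controlled by \eqref{slojna} with $j=2$ and Taylor's theorem for $r$. I expect this term — reconciling the quadrature error of the right-hand side with the available $\varepsilon^{-2}$ growth of the layer derivatives — to be the main (though still routine) obstacle; everything else is a verbatim transcription of the estimates already carried out for $F$.
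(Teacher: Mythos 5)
Your proposal is correct and follows essentially the same route as the paper: on the equidistant part one uses $h_{i-1}=h_i$ and hyperbolic identities to rewrite $Gy_i$ as $\gamma\frac{1+\cosh(\beta h_i)}{\cosh(\beta h_i)-1}(y_{i-1}-2y_i+y_{i+1})-(f_{i-1}+2f_i+f_{i+1})$, then bounds the second difference via the decomposition $y=r+s$ (Taylor for $r$, exponential smallness of $s$ since $x_{i-1}\geqslant\lambda$), bounds $\frac{1}{\cosh(\beta h_i)-1}\leqslant\frac{2\varepsilon^2}{\gamma h_i^2}\leqslant C$, and bounds the right-hand side term by $\varepsilon^2|y''_{i-1}+2y''_i+y''_{i+1}|\leqslant C(\varepsilon^2+N^{-2})$ using $f=\varepsilon^2y''$. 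Your final worry about reconciling the three-point average with $f_i$ is unnecessary --- the paper simply estimates the whole combination $f_{i-1}+2f_i+f_{i+1}$ directly by $C/N^2$, exactly as you do in your third paragraph --- but this does not affect correctness.
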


\begin{proof}
   Let us rewrite $G\tilde{y}_i$ in the following form
\begin{align}
  G\tilde{y}_i&=\frac{\gamma}{2\dfrac{\cosh(\beta h_i)-1}{\sinh(\beta h_i)}}
   \left[ \frac{2(\cosh(\beta h_i)+1)}{\sinh(\beta h_i)}(y_{i-1}-y_{i})-\frac{2(\cosh(\beta h_i)+1)}{\sinh(\beta h_i)}(y_i-y_{i+1}) \right.\nonumber\\
           &\hspace{3cm}-\left.\varepsilon^2\frac{y''_{i-1}-y''_i+y''_{i+1}}{\gamma}\cdot\frac{2(\cosh(\beta h_i)-1)}{\sinh(\beta h_i)}\right]\nonumber\\
           &=\frac{\gamma}{\cosh(\beta h_i)-1}\left[ (\cosh(\beta h_i)-1)(y_{i-1}-2y_i+y_{i+1})-2(y_{i-1}-2y_i+y_{i+1})\right.\nonumber\\
           &\hspace{3cm}   -\left.\varepsilon^2(y''_{i-1}-2y''_i+y''_{i+1})\cdot\frac{\cosh(\beta h_i)-1}{\gamma}\right]\nonumber\\
           &=\gamma(y_{i-1}-2y_i+y_{i+1})-\frac{2\gamma(y_{i-1}-2y_i+y_{i+1})}{\cosh(\beta h_i)-1} -\varepsilon^2(y''_{i-1}-y''_i+y''_{i+1}).\label{lema3a}  
\end{align} 
Using Theorem \ref{teorema1}, Taylor expansion, assumption $\varepsilon\leqslant\frac{1}{N}$ and the properties of the mesh from Section \ref{mreza}, let us estimate the expressions from \eqref{lema3a}.  We get that

\begin{align}
 |y_{i-1}-2y_i+y_{i+1}|&\leqslant C_1\left( |r_{i-1}-r_i+r_{i+1}|+|s_{i-1}-2s_i+s_{i+1}|\right)\nonumber\\
                       &\leqslant C_2\left(\frac{\left|r''(\xi^{+}_i)+r''(\xi^{-}_i)\right|}{2}h^2_i+e^{-\frac{x_{i-1}}{\varepsilon}\sqrt{m}} \right)
                         \leqslant \frac{C_3}{N^2},\label{lema3b}\\
  \frac{1}{\cosh(\beta h_i)-1}&\leqslant\frac{2}{(\beta h_i)^2} =\frac{2\varepsilon^2}{\gamma h^2_i}\leqslant C_3,\label{lema3c}\\  
  \varepsilon^2\left| y''_{i-1}-y''_i+y''_{i+1}\right|&\leqslant \varepsilon^2\left| r''_{i-1}-r''_i+r''_{i+1}\right|
                               +\varepsilon^2\left|s''_{i-1}-s''_i+s''_{i+1}\right| \nonumber\\
                       &\leqslant C_4\varepsilon^2\left( 1+\frac{e^{-\frac{x_{i-1}}{\varepsilon}\sqrt{m}}}{\varepsilon^2}\right)\leqslant \frac{C_5}{N^2},\label{lema3d}
\end{align}   
where $\xi^{-}_{i}\in(x_{i-1},x_i),\:\xi^{+}_i\in(x_i,x_{i+1}).$

Now using \eqref{lema3a}, \eqref{lema3b},\eqref{lema3c} and \eqref{lema3d}, we obtain \eqref{lema3aa}.

\end{proof}

\begin{lemma}\label{lema4}
  Assume that $\varepsilon\leqslant \frac{C}{N}.$ In the part of the Shiskin mesh from Section {\rm\ref{mreza}}, when $x_i=x_{N/4},$ we have the following estimate
\begin{equation}
    \left|Gy_{N/4}\right|\leqslant \frac{C}{N}.
    \label{lema4a}
\end{equation}
\end{lemma}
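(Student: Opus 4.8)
The plan is to mimic the structure of the proof of Lemma~\ref{lema2}, since the second scheme's operator $G$ at the transition point $x_{N/4}$ has essentially the same asymmetric character: $h_{i-1}=h_{N/4-1}=\frac{4\lambda}{N}=\frac{8\varepsilon\ln N}{\sqrt m}$ is the fine step, while $h_i=h_{N/4}$ is the coarse step of size $\Theta(1/N)$, so the mesh is non-uniform exactly at this node and the symmetric cancellations used in Lemma~\ref{lema3} are no longer available. First I would write $G\tilde y_{N/4}$ in the unsimplified form coming from \eqref{schema2}/\eqref{schema2aa}, i.e.
\begin{multline*}
  Gy_i=\frac{\gamma}{\triangle d_i+\triangle d_{i+1}}\Bigl[(3a_i+d_i+\triangle d_{i+1})(y_{i-1}-y_i)
    -(3a_{i+1}+d_{i+1}+\triangle d_i)(y_i-y_{i+1})\\
    -\tfrac{f_{i-1}+2f_i+f_{i+1}}{\gamma}(\triangle d_i+\triangle d_{i+1})\Bigr],\qquad i=N/4,
\end{multline*}
and then use $\varepsilon^2y''=f(x,y)$ to replace the nonlinear bracket by $-\varepsilon^2(y''_{i-1}+2y''_i+y''_{i+1})$.

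Next I would split $y=r+s$ via Theorem~\ref{teorema1} and estimate the contributions separately. For the nonlinear/second-derivative term, the prefactor $\gamma$ multiplied by $(\triangle d_i+\triangle d_{i+1})/(\triangle d_i+\triangle d_{i+1})=1$ leaves $\varepsilon^2|y''_{i-1}+2y''_i+y''_{i+1}|$; using $|r''|\le C$ and $|s''(x)|\le C\varepsilon^{-2}e^{-x\sqrt m/\varepsilon}$ together with $\varepsilon\le C/N$ and $x_{N/4-1}\ge\lambda-h_{N/4-1}$, this is bounded by $C(\varepsilon^2+N^{-2})\le C/N^2$, exactly as in \eqref{lema2c}. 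For the layer component $s$ in the linear part, I would bound the coefficients $a_i,d_i,a_{i+1},d_{i+1}$ and the ratio against $\triangle d_i+\triangle d_{i+1}$ — noting $\triangle d_j=d_j-a_j=\frac{\beta(\cosh(\beta h_{j-1})-1)}{\sinh(\beta h_{j-1})}=\beta\tanh\frac{\beta h_{j-1}}{2}$ — so that the whole linear-$s$ term is controlled by $C(|s_{i-1}-s_i|+|s_i-s_{i+1}|)\le C/N^2$, using $|s'|\le C\varepsilon^{-1}e^{-x\sqrt m/\varepsilon}$ and $x_{N/4-1}\ge\lambda-h_{N/4-1}$ so that $e^{-x_{N/4-1}\sqrt m/\varepsilon}\le CN^{-2}$.

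The main obstacle, exactly as in Lemma~\ref{lema2}, is the regular component $r$ in the linear part, which only yields an $O(1/N)$ bound. Here I would keep track of the fact that $a_j,d_j$ behave like $1/h_{j-1}$ for small $\beta h_{j-1}$ (fine step) and like $\beta$ for $\beta h_{j-1}$ of order $\ln N$ (coarse step, since $\beta h_i=\frac{\gamma}{\varepsilon}h_i$ with $h_i=\Theta(1/N)$ is large), so the quotient $\frac{3a_i+d_i+\triangle d_{i+1}}{\triangle d_i+\triangle d_{i+1}}$ is $O(1/(\beta h_{i-1}))=O(1/N\cdot 1/\varepsilon)=O(1)\cdot$ wait — more carefully, $\triangle d_{i-\text{terms}}$ in the denominator are bounded below by $\triangle d_i=\beta\tanh\frac{\beta h_{i-1}}{2}\asymp \beta^2 h_{i-1}=\frac{\gamma h_{i-1}}{\varepsilon}\asymp\frac{\varepsilon\ln N}{\varepsilon}=\ln N$ while the numerator coefficient $3a_i+d_i\asymp 1/h_{i-1}=\frac{\sqrt m}{8\varepsilon\ln N}$, giving a ratio of order $\frac{1}{\varepsilon\ln^2 N}$. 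Multiplying by $|r_{i-1}-r_i|\le Ch_{i-1}=C\varepsilon\ln N$ yields $O(1/\ln N)$, and the symmetric coarse-side term $\frac{3a_{i+1}+d_{i+1}}{\triangle d_i+\triangle d_{i+1}}|r_i-r_{i+1}|$ with $a_{i+1},d_{i+1}\asymp\beta$ and $|r_i-r_{i+1}|\le Ch_i=C/N$ gives $O(1/N)$; combining, one obtains $|Gy_{N/4}|\le C/N$. I would conclude by collecting the three bounds — the $r$-contribution being the dominant $O(1/N)$ term — which establishes \eqref{lema4a}. The delicate point to get right is the precise two-sided estimates on $a_j,d_j,\triangle d_j$ in the two regimes $\beta h_{j-1}$ small versus $\beta h_{j-1}$ large, since a naive bound would lose the needed factor.
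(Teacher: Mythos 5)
Your overall strategy coincides with the paper's: rewrite $Gy_{N/4}$ from \eqref{schema2aa}, replace the $f$--sum by $\varepsilon^2$ times second derivatives, split $y=r+s$ via Theorem \ref{teorema1}, and accept that the regular component at the non-uniform transition node is what limits the order to $1/N$. Your treatment of the layer and nonlinear contributions is fine. But the decisive estimate --- the regular-component part of the linear term on the fine side --- is not established by your argument, and as written your own numbers contradict your conclusion: you derive a bound of order $1/\ln N$ for that term and then assert $\left|Gy_{N/4}\right|\leqslant C/N$.

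The source of the problem is your choice of lower bound for the denominator $\triangle d_i+\triangle d_{i+1}$: you bound it below by the fine-side term $\triangle d_i=\beta\tanh\frac{\beta h_{i-1}}{2}$. Since $h_{i-1}=\frac{8\varepsilon\ln N}{N\sqrt m}$ (the displayed formula \eqref{mesh4} is missing the factor $1/N$; the paper's later estimates such as \eqref{lema4g} and \eqref{lema4i} use the correct value, whereas you carry the typo into your asymptotics) and $\beta^2=\gamma/\varepsilon^2$ (not $\gamma/\varepsilon$ as in your computation), one has $\triangle d_i\asymp\beta^2h_{i-1}\asymp\frac{\ln N}{\varepsilon N}$, which is much smaller than the coarse-side term $\triangle d_{i+1}=\beta\tanh\frac{\beta h_i}{2}\asymp\beta\asymp\varepsilon^{-1}$, because $\beta h_i\gtrsim\frac{1}{\varepsilon N}\geqslant\frac{1}{C}$ is bounded away from zero. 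Even after correcting the arithmetic, keeping only $\triangle d_i$ in the denominator gives for the fine-side $r$-term a bound of order $\frac{N^2}{\ln^2N}\cdot h_{i-1}\asymp\frac{N\varepsilon}{\ln N}\leqslant\frac{C}{\ln N}$, which does not yield \eqref{lema4a}. You must use $\triangle d_{i+1}$ instead: then the ratio $a_i/(\triangle d_i+\triangle d_{i+1})$ is $O(N/\ln N)$, and multiplying by $|r_{i-1}-r_i|\leqslant Ch_{i-1}\asymp\frac{\varepsilon\ln N}{N}$ gives $C\varepsilon\leqslant\frac{C}{N}$ --- which is precisely the combination of \eqref{lema4f} (where the prefactor is controlled by $\tanh\frac{\beta h_i}{2}$, i.e.\ by the coarse side) with \eqref{lema4i} in the paper. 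Fixing this single point, together with the two arithmetic slips, repairs the proof.
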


\begin{proof}
Using \eqref{schema2aa}, let us write $Gy_i$ in the following form

\begin{align}
  Gy_i&=\frac{\gamma}{\triangle d_i+\triangle d_{i+1}}
     \left[ (4a_i+\triangle d_i+\triangle d_{i+1})(y_{i-1}-y_i) -(4a_{i+1}+\triangle d_i+\triangle d_{i+1})(y_{i}-y_{i+1})\right] \nonumber\\
      &\hspace{.5cm}-\left(f_{i-1}+2f_i+f_{i+1} \right)\nonumber\\
      &=\frac{4\gamma}{\triangle d_i+\triangle d_{i+1}}\left[a_i(y_{i-1}-y_i)-a_{i+1}(y_i-y_{i+1}) \right]+\gamma(y_{i-1}-2y_i+y_{i+1})-\left(f_{i-1}+2f_i+f_{i+1} \right).
       \label{lema4b}   
\end{align}

In a similar way, as in the previously lemmas, we can get 
\begin{align}
  |y_{i-1}-2y_i+y_{i+1}|&\leqslant|s_{i-1}-2s_i+s_{i+1}|+|r_{i-1}-2r_i+r_{i+1}|\leqslant C_1\left( \frac{1}{N^2}+\frac{1}{N} \right)\label{lema4c},\\
  \left|f_{i-1}+2f_i+f_{i+1} \right|&\leqslant \frac{C_2}{N^2}.\label{lema4d}
\end{align}

Using the identity $\frac{\cosh x-1}{\sinh}=\tanh\frac{x}{2} $ and  Theorem \ref{teorema1}, we have that

\begin{align}
& \frac{4\gamma}{\triangle d_i+\triangle d_{i+1}}\left[ a_i(y_{i-1}-y_i)-a_{i+1}(y_i-y_{i+1}) \right]\nonumber\\
&\hspace{1cm}   =\frac{4\gamma}{\tanh\frac{\beta h_{i-1}}{2}+\tanh\frac{\beta h_i}{2}}
       \left[\frac{1}{\sinh(\beta h_{i-1})}|s_{i-1}-s_i|-\frac{1}{\sinh(\beta h_{i})}|s_{i}-s_{i+1}|\right.\nonumber\\
&\hspace{1.5cm}+\left.\frac{1}{\sinh(\beta h_{i-1})}|r_{i-1}-r_i|-\frac{1}{\sinh(\beta h_{i})}|r_{i}-r_{i+1}|  \right].\label{lema4e}
\end{align}

Due to Theorem \ref{teorema1} and assumption $\varepsilon\leqslant \frac{C}{N},$ hold the next inequalities
 \begin{align}
    \frac{\gamma}{\tanh\frac{\beta h_{i-1}}{2}+\tanh\frac{\beta h_i}{2}}&\leqslant\frac{4\gamma}{\tanh\frac{\beta h_i}{2}}\leqslant C_1,\label{lema4f}\\
    \frac{1}{\sinh(\beta h_{i-1})}|s_{i-1}-s_i|&\leqslant \frac{1}{\beta h_{i-1}}|s_{i-1}-s_i|
           \leqslant C_2\cdot\frac{1}{\frac{\ln N}{N}}\cdot\frac{1}{N^2}=\frac{C_2}{N\ln N},\label{lema4g}\\
    \frac{1}{\sinh(\beta h_i)}|s_{i}-s_{i+1}|&\leqslant\frac{1}{\beta h_i}|s_{i}-s_{i+1}|\leqslant\frac{C_3}{N^2},\label{lema4h}\\
    \frac{1}{\sinh(\beta h_{i-1})}|r_{i-1}-r_i|&\leqslant\frac{1}{\beta h_{i-1}}|r_{i-1}-r_i|\leqslant\frac{1}{\frac{\ln N }{N}}
           \cdot C_4\frac{\varepsilon \ln N}{N}=C_4 \varepsilon,\label{lema4i}\\
    \frac{1}{\sinh(\beta h_{i})}|r_{i}-r_{i+1}|&\leqslant\frac{1}{\beta h_i}  |r_{i}-r_{i+1}|\leqslant\frac{C_5}{N}.    \label{lema4j} 
     \end{align}
Now, using \eqref{lema4b}, \eqref{lema4c}, \eqref{lema4d}, \eqref{lema4e}, \eqref{lema4f}, \eqref{lema4g}, \eqref{lema4h}, \eqref{lema4i} and  \eqref{lema4j}, we obtain \eqref{lema4a}.  
\end{proof}

\begin{theorem}\label{th2}
The discrete problem \eqref{scheme2a} on the mesh from Section  {\rm\ref{mreza}} is uniformly convergent with respect to $\varepsilon$ and
   \begin{equation}
     \max_{i}\left|y_i-\tilde{y}_i \right|\leqslant C
        \left\{        
        \begin{array}{cl}
                   \dfrac{\ln^2N}{N^2},\:& i\in\{0,1,\ldots,N/4-1\}\vspace{.25cm}\\
                   \dfrac{1}{N^2},\:& i\in\{N/4+1,\ldots,3N/4-1\}\vspace{.25cm}\\
                   \dfrac{1}{N},\:& i\in\{N/4,3N/4\}\vspace{.25cm}\\
                   \dfrac{\ln^2N}{N^2},\:& i\in\{3N/4+1,\ldots,N\}, 
        \end{array}
       \right. 
       \label{th2a}
   \end{equation}
where $y$ is the solution of the problem \eqref{uvod1}--\eqref{uvod3}, $\tilde{y}$ is the corresponding solution of \eqref{schema2b} and $C>0$ is a constant independent of $N$ and $\varepsilon.$

\end{theorem}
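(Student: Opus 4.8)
The plan is to mirror the proof of the previous theorem. By the stability inequality for the operator $G$ established above (see \cite{samir2015uniformlyconvergent}), $G$ is $\tfrac1m$-stable in the maximum norm, so, since $\tilde y$ solves $G\tilde y=0$,
\[
 \|y-\tilde y\|_\infty\le\frac1m\|Gy-G\tilde y\|_\infty=\frac1m\|Gy\|_\infty ,
\]
and the whole task reduces to estimating the consistency error $|Gy_i|$ of the \emph{exact} solution $y$ at each mesh point of Section~\ref{mreza}, then reading off \eqref{th2a} node by node from the size of $|Gy_i|$. Throughout I would substitute $\varepsilon^2y''=f(x,y)$, use the decomposition $y=r+s$ of Theorem~\ref{teorema1} with \eqref{regularna} for the regular part and \eqref{slojna} for the layer part, together with a Taylor expansion of $r$, and exploit the assumption $\varepsilon\le C/N$ (equivalently $\lambda=2\varepsilon\ln N/\sqrt m$) and the step sizes \eqref{mesh4}--\eqref{mesh5}. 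As explained at the start of the present section, the symmetry $x\mapsto 1-x$ lets me work on the left half $[0,1/2]$, which splits into the four blocks already isolated there.

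On the fine block near $x=0$, $x_i,x_{i\pm1}\in[0,\lambda]$ with $i=1,\dots,N/4-1$, the mesh is uniform, $h_{i-1}=h_i=8\varepsilon\ln N/(\sqrt m\,N)$, so $\beta h_i\sim\ln N/N$ is small; here the analysis is that of \cite{samir2015uniformlyconvergent}, which I would only recall: starting from the representation of $Gy_i$ in the form \eqref{lema3a}, the exponentially fitted coefficients $d_i,a_i$ essentially annihilate the pure exponentials, so that only the smooth part $r$ and the small variation of $f$ across a cell contribute, giving $|Gy_i|\le C\ln^2N/N^2$. On the coarse interior block $x_i,x_{i\pm1}\in[x_{N/4+1},x_{N/2-1}]$, Lemma~\ref{lema3} already gives $|Gy_i|\le C/N^2$; at the transition node $i=N/4$, Lemma~\ref{lema4} gives $|Gy_{N/4}|\le C/N$. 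The remaining node $i=N/2$ lies far from both layers, where $s$ and all its derivatives are super-algebraically small by \eqref{slojna}, and the mesh is locally uniform, so $|Gy_{N/2}|\le C/N^2$ is immediate.

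Collecting the four estimates, $|Gy_i|\le C/N$ at $i=N/4$ and $|Gy_i|\le C\ln^2N/N^2$ at every other node of the left half; by the symmetry of the scheme \eqref{schema2} and of the Shishkin mesh under $x\mapsto1-x$ (which only interchanges the two layer exponentials and the inequalities $h_{i-1}\le h_i$, $h_{i-1}\ge h_i$), the same bounds hold on $[1/2,1]$ with $i=3N/4$ playing the role of $i=N/4$. Inserting $\|Gy\|_\infty$ into the stability inequality yields the bound $C/N$ at the transition nodes $\{N/4,3N/4\}$; to get the sharper $C\ln^2N/N^2$ away from them, I would, exactly as in the proof of the previous theorem, localise the stability estimate — splitting the residual into its pieces and using a discrete barrier/maximum-principle argument — so that the $O(1/N)$ residual supported at the two transition nodes affects the error only there, while the $O(\ln^2N/N^2)$ residual controls it elsewhere. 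The main obstacle is precisely this transition-point analysis, i.e.\ Lemma~\ref{lema4} and its use: across $x_{N/4}$ the step jumps from $O(\varepsilon\ln N/N)$ to $O(1/N)$, so one unavoidably loses a factor $N$ there, and care is needed to confine that loss; the rest is routine bookkeeping over the four regions.
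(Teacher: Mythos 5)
Your proposal follows essentially the same route as the paper: the $\tfrac1m$-stability of $G$ combined with the four region-by-region consistency estimates, citing the identical sources — \cite{samir2015uniformlyconvergent} for the fine mesh near the boundary, Lemma~\ref{lema3} for the coarse interior, Lemma~\ref{lema4} for the transition node $i=N/4$, and the trivial bound at $i=N/2$ — and then symmetry for $[1/2,1]$. The only divergence is your closing remark about localising the stability estimate so the $O(1/N)$ residual at the transition nodes does not pollute the other nodes; the paper does not perform any such localisation and simply feeds the residual bounds into the global max-norm stability inequality, so on this point you are being more careful than the source rather than deviating from it.
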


\begin{proof}
Again, let us divide the proof on four parts. \\
Suppose first that $x_i,\,x_{i\pm 1}\in[0,\lambda],\:i=1,\ldots,N/4.$ The proof for this part of the mesh has already been done in \cite[Theorem 4.4]{samir2015uniformlyconvergent}. It is proved that
\begin{equation}
   \left| Gy_i\right|\leqslant C\frac{\ln^2N}{N^2},\:i=0,1,\ldots,N/4-1.
 \label{th2b}
\end{equation}
Secondly, suppose that $x_i,\,x_{i\pm 1}\in[x_{N/4+1},x_{N/2}-1].$ Due to Lemma \ref{lema3}, we have that 
\begin{equation}
   \left| Gy_i\right|\leqslant \frac{C}{N^2}.
 \label{th2c}
\end{equation}
In the case $i=N/4,$ based on Lemma \ref{lema4}, we have the following estimate
\begin{equation}
   \left| Gy_{N/4}\right|\leqslant\frac{C}{N}.
 \label{th2d}
\end{equation}
At the end, in the case $i=N/2,$ the proof is trivial, because of the properties of the mesh and the layer component. Hence, it is true that
\begin{equation}
  \left|Gy_i\right|\leqslant\frac{C}{N^2}.
 \label{th2e}
\end{equation} 
Using \eqref{th2b}, \eqref{th2c}, \eqref{th2d} and \eqref{th2e}, we complete the statement of the theorem. 
\end{proof}

\section{Numerical experiments}

In this section we present numerical results to confirm the uniform accuracy of the discrete problems  \eqref{diskretni2} and \eqref{schema2b}. Both discrete problems will be checked on two different examples. First one is the linear boundary value problem, whose exact solution is known. Second example is the nonlinear boundary value problem whose exact solution is unknown.

For the problems from our examples whose exact solution is known, we calculate $E_N$ as
\begin{equation}
  E_N=\max_{0\leqslant i\leqslant N}\left|y(x_i)-\overline{y}^{N}(x_i)\right| \text{ or } 
   E_N=\max_{0\leqslant i\leqslant N}\left|y(x_i)-\tilde{y}^{N}(x_i)\right|, 
 \label{greska1}
\end{equation} 
for the problems, whose exact solution is unknown, we calculate $E_N$, as
\begin{equation}
   E_N=\max_{0\leqslant i\leqslant N}\left|\overline{y}^{2N}_S(x_i)-\overline{y}^{N}(x_i)\right| \text{ or } 
   E_N=\max_{0\leqslant i\leqslant N}\left|\tilde{y}^{2N}_S(x_i)-\tilde{y}^{N}(x_i)\right|, 
 \label{greska2}
\end{equation}
the rate of convergence  $\ord$ we calculate in the usual way
\begin{equation}
    \ord=\frac{\ln E_N-\ln E_{2N}}{\ln \frac{2k}{k+1}}
   \label{greska3}
\end{equation}
where $N=2^k,\:k=6,7,\ldots,11,\:$ $\overline{y}^N(x_i),\;\tilde{y}^N(x_i)$  are the values of the numerical solutions on a mesh with $N+1$ mesh points, and 
$\overline{y}^{2N}_S(x_i),\;\tilde{y}^{2N}_S(x_i)$  are  the values of the numerical solutions on a mesh with $2N+1$ mesh points and the transition points altered slightly to 
$\lambda_S=\min\left\{ \frac{2}{4},\frac{2\varepsilon}{\sqrt{m}}\ln\frac{N}{2}\right\}.$  

\begin{remark}
  In a case when the exact solution is unknown we  use the double mesh method, see {\rm \cite{doolan1980uniform, stynes1996, surla2003uniformly} for details}. 
\end{remark}

\begin{example} \upshape
 Consider the following problem
\begin{equation*}
 \epsilon^2y''=y+1-2\varepsilon^2+x(x-1)\quad\text{for }x\in(0,1),\quad y(0)=y(1)=0.
\end{equation*}
The exact solution of this problem is given by
$\displaystyle
  y(x)=\frac{e^{-\frac{x}{\epsilon}}+e^{-\frac{1-x}{\epsilon}}}{1+e^{-\frac{1}{\epsilon}}}-x(x-1)-1.
$
\label{primjer1}
The nonlinear system was solved using the initial condition $y_0=-0.5$ and the value of the constant  $\gamma=1$.
\end{example}

\begin{center}
\begin{table}[!h]\small
\centering
\begin{tabular}{c|cc|cc|cc}\hline
     $N$ &$E_n$&Ord&$E_n$&Ord&$E_n$&Ord\\\hline
$2^{6}$&$8.1585e-04$&$2.00$   &$2.8932e-03$&$2.02$    &$2.5827e-02$&$2.05$ \\
$2^{7}$&$2.7762e-04$&$2.00$   &$9.7397e-04$&$2.01$    &$8.5547e-03$&$1.96$ \\
$2^{8}$&$9.0650e-05$&$2.00$   &$3.1625e-04$&$2.00$    &$2.8566e-03$&$1.99$ \\
$2^{9}$&$3.5410e-05$&$2.00$   &$1.2353e-04$&$2.00$    &$1.2111e-03$&$2.00$ \\
$2^{10}$&$1.5738e-05$&$2.00$  &$5.4904e-05$&$2.00$    &$4.9827e-04$&$2.00$ \\
$2^{11}$&$7.7116e-06$&$-$     &$2.6903e-05$&$-$       &$2.4415e-04$&$-$ \\\hline
 $\varepsilon$&\multicolumn{2}{c}{$2^{-3}$}&\multicolumn{2}{c}{$2^{-5}$}&\multicolumn{2}{c}{$2^{-10}$}\\\hline\hline
    $N$ &$E_n$&Ord&$E_n$&Ord&$E_n$&Ord\\\hline
$2^{6}$   &$3.9901e-02$&$2.04$     &$3.9901e-02$&$2.04$  &$3.9901e-02$&$2.04$ \\
$2^{7}$   &$1.3288e-02$&$1.93$     &$1.3288e-02$&$1.93$  &$1.3288e-02$&$1.93$  \\
$2^{8}$   &$4.5122e-03$&$1.99$     &$4.5122e-03$&$1.99$  &$4.5122e-03$&$1.99$ \\
$2^{9}$   &$1.7709e-03$&$1.98$     &$1.7709e-03$&$1.98$  &$1.7709e-03$&$1.98$ \\
$2^{10}$  &$7.9347e-04$&$1.98$     &$7.9347e-04$&$1.98$  &$7.9347e-04$&$1.98$ \\
$2^{11}$  &$3.9158e-04$&$-$        &$3.9158e-04$&$-$     &$3.9158e-04$&$-$  \\\hline
 $\varepsilon$&\multicolumn{2}{c}{$2^{-15}$}&\multicolumn{2}{c}{$2^{-25}$}&\multicolumn{2}{c}{$2^{-30}$}\\\hline\hline
     $N$ &$E_n$&Ord&$E_n$&Ord&$E_n$&Ord\\\hline
$2^{6}$   &$3.9901e-02$&$2.04$     &$4.0243e-02$&$2.02$  &$4.0248e-02$&$2.02$ \\
$2^{7}$   &$1.3288e-02$&$1.93$     &$1.3581e-02$&$1.92$  &$1.3582e-02$&$1.92$  \\
$2^{8}$   &$4.5122e-03$&$1.99$     &$4.6375e-03$&$1.97$  &$4.6381e-03$&$1.97$ \\
$2^{9}$   &$1.7709e-03$&$1.98$     &$1.8372e-03$&$1.98$  &$1.8375e-03$&$1.98$ \\
$2^{10}$  &$1.7709e-03$&$1.98$     &$8.2321e-04$&$1.98$  &$8.2331e-04$&$1.98$\\
$2^{11}$  &$3.9158e-04$&$-$        &$4.0626e-04$&$-$     &$4.0631e-04$&$-$ \\\hline
 $\varepsilon$&\multicolumn{2}{c}{$2^{-35}$}&\multicolumn{2}{c}{$2^{-40}$}&\multicolumn{2}{c}{$2^{-45}$}\\\hline
\end{tabular}
\caption{Errors $E_N$ and convergence rates Ord for approximate solutions from Example \ref{primjer1}.}\label{tabela1}
\end{table}
\end{center}

\begin{example}\label{primjer2}\upshape
Consider the following problem 
  \begin{align}
      \varepsilon^2y''=y^3+y-2\text{ for } (0,1),\quad y(0)=y(1)=0,
   \label{primjer2a}
  \end{align}
whose exact solution is unknown. The nonlinear system was solved using the initial condition $y_0=1,$ that represents the reduced solution. The value of the constant $\gamma=4$ has been chosen so that the condition $\gamma\geqslant f_y(x,y),\:\forall (x,y)\in[0,1]\times [y_L,y_U]\subset[0,1]\times\mathbb{R}$ is fulfilled, where $y_L$ and $y_U$ are lower and upper solutions, respectively, of the problem \eqref{primjer2a}. Because of the fact that the exact solution is unknown, we are going to calculate $E_n$ using \eqref{greska2}.
 \label{primjer2}
\end{example}

\begin{center}
\begin{table}[!h]\small
\centering
\begin{tabular}{c|cc|cc|cc}\hline
     $N$ &$E_n$&Ord&$E_n$&Ord&$E_n$&Ord\\\hline
$2^{6}$  &$7.1345e-04$&$2.02$   &$3.7134e-03$&$2.01$    &$1.5182e-02$&$2.09$ \\
$2^{7}$  &$2.4017e-04$&$2.01$   &$1.2564e-04$&$2.01$    &$4.9236e-03$&$1.96$ \\
$2^{8}$  &$7.7985e-05$&$2.00$   &$3.1655e-04$&$2.00$    &$1.6403e-03$&$2.09$ \\
$2^{9}$  &$3.0463e-05$&$2.00$   &$1.2959e-04$&$2.00$    &$5.1903e-04$&$2.00$ \\
$2^{10}$ &$1.3539e-05$&$2.00$   &$3.9986e-05$&$2.00$    &$1.6001e-04$&$2.00$ \\
$2^{11}$ &$6.6341e-06$&$-$      &$1.2096e-05$&$-$       &$4.8389e-05$&$-$ \\\hline
 $\varepsilon$&\multicolumn{2}{c}{$2^{-3}$}&\multicolumn{2}{c}{$2^{-5}$}&\multicolumn{2}{c}{$2^{-10}$}\\\hline\hline
    $N$   &$E_n$&Ord&$E_n$&Ord&$E_n$&Ord\\\hline
$2^{6}$   &$1.5181e-02$&$2.09$     &$1.5181e-02$&$2.09$  &$1.5181e-02$&$2.09$ \\
$2^{7}$   &$4.9236e-03$&$1.96$     &$4.9236e-03$&$1.96$  &$4.9236e-03$&$1.96$  \\
$2^{8}$   &$1.6403e-03$&$2.00$     &$1.6403e-03$&$2.00$  &$1.6403e-03$&$2.00$  \\
$2^{9}$   &$5.1903e-04$&$2.00$     &$5.1903e-04$&$2.00$  &$5.1903e-04$&$2.00$ \\
$2^{10}$  &$1.6001e-04$&$2.00$     &$1.6001e-04$&$2.00$  &$1.6001e-04$&$2.00$\\
$2^{11}$  &$4.8389e-05$&$-$        &$4.8389e-05$&$-$     &$4.8389e-05$&$-$ \\\hline
 $\varepsilon$&\multicolumn{2}{c}{$2^{-15}$}&\multicolumn{2}{c}{$2^{-25}$}&\multicolumn{2}{c}{$2^{-30}$}\\\hline\hline
     $N$ &$E_n$&Ord&$E_n$&Ord&$E_n$&Ord\\\hline
$2^{6}$   &$1.5181e-02$&$2.09$     &$1.5184e-02$&$2.09$  &$1.5795e-02$&$2.09$ \\
$2^{7}$   &$4.9236e-03$&$1.96$     &$4.9221e-03$&$1.96$  &$5.1202e-03$&$1.96$  \\
$2^{8}$   &$1.6403e-03$&$2.00$     &$1.6436e-03$&$1.99$  &$1.7097e-03$&$1.99$ \\
$2^{9}$   &$5.1903e-04$&$2.00$     &$6.4509e-04$&$2.00$  &$6.7102e-04$&$2.00$ \\
$2^{10}$  &$1.6002e-04$&$2.00$     &$2.8669e-04$&$2.00$  &$2.9823e-04$&$2.00$\\
$2^{11}$  &$4.8390e-05$&$-$        &$1.4048e-04$&$-$     &$1.4613e-04$&$-$ \\\hline
 $\varepsilon$&\multicolumn{2}{c}{$2^{-35}$}&\multicolumn{2}{c}{$2^{-40}$}&\multicolumn{2}{c}{$2^{-45}$}\\\hline
\end{tabular}
\caption{Errors $E_N$ and convergence rates Ord for approximate solutions from Example \ref{primjer2}.}\label{tabela2}
\end{table}
\end{center}

\begin{example} \upshape
 Consider the following problem
\begin{equation*}
 \epsilon^2y''=y+1-2\varepsilon^2+x(x-1)\quad\text{for }x\in(0,1),\quad y(0)=y(1)=0.
\end{equation*}
The exact solution of this problem is given by
$\displaystyle
  y(x)=\frac{e^{-\frac{x}{\epsilon}}+e^{-\frac{1-x}{\epsilon}}}{1+e^{-\frac{1}{\epsilon}}}-x(x-1)-1.
$
The nonlinear system was solved using the initial condition $y_0=-0.5$ and the value of the constant  $\gamma=1$.
\label{primjer3}
\end{example}

\begin{example}\upshape
Consider the following problem 
  \begin{align}
      \varepsilon^2y''=y^3+y-2\text{ for } (0,1),\quad y(0)=y(1)=0,
   \label{primjer4}
  \end{align}
whose exact solution is unknown. The nonlinear system was solved using the initial condition $y_0=1,$ that represents the reduced solution. The value of the constant $\gamma=4$ has been chosen so that the condition $\gamma\geqslant f_y(x,y),\:\forall (x,y)\in[0,1]\times [y_L,y_U]\subset[0,1]\times\mathbb{R}$ is fulfilled, where $y_L$ and $y_U$ are lower and upper solutions, respectively, of the problem \eqref{primjer2a}. Because of the fact that the exact solution is unknown, we are going to calculate $E_n$ using \eqref{greska2}.
 \label{primjer4}
\end{example}

\newpage
\begin{center}
\begin{table}[!h]\small
\centering
\begin{tabular}{c|cc|cc|cc}\hline
     $N$ &$E_n$&Ord&$E_n$&Ord&$E_n$&Ord\\\hline
$2^{6}$&$9.0262e-04$&$2.05$   &$4.4799e-03$&$2.03$    &$3.9479e-02$&$2.01$ \\
$2^{7}$&$2.8729e-04$&$1.91$   &$1.4999e-03$&$1.92$    &$1.3362e-03$&$1.93$ \\
$2^{8}$&$9.8102e-05$&$1.95$   &$5.1221e-04$&$1.95$    &$4.5373e-03$&$1.96$ \\
$2^{9}$&$3.9049e-05$&$1.99$   &$2.0484e-04$&$1.99$    &$1.8060e-03$&$1.97$ \\
$2^{10}$&$1.7496e-05$&$1.99$  &$9.1409e-05$&$1.99$    &$8.1249e-04$&$1.97$ \\
$2^{11}$&$8.6345e-06$&$-$     &$4.4951e-05$&$-$       &$4.0241e-04$&$-$ \\\hline
 $\varepsilon$&\multicolumn{2}{c}{$2^{-3}$}&\multicolumn{2}{c}{$2^{-5}$}&\multicolumn{2}{c}{$2^{-10}$}\\\hline\hline
    $N$ &$E_n$&Ord&$E_n$&Ord&$E_n$&Ord\\\hline
$2^{6}$    &$3.9479e-02$&$2.01$  &$3.9479e-02$&$2.01$  &$3.9479e-02$&$2.01$ \\
$2^{7}$    &$1.3362e-03$&$1.93$  &$1.3362e-03$&$1.93$  &$1.3362e-03$&$1.93$  \\
$2^{8}$    &$4.5373e-03$&$1.96$  &$4.5373e-03$&$1.96$  &$4.5373e-03$&$1.96$ \\
$2^{9}$    &$1.8060e-03$&$1.97$  &$1.8060e-03$&$1.97$  &$1.8060e-03$&$1.97$ \\
$2^{10}$   &$8.1249e-04$&$1.97$  &$8.1249e-04$&$1.97$  &$8.1249e-04$&$1.97$ \\
$2^{11}$   &$4.0241e-04$&$-$     &$4.0241e-04$&$-$     &$4.0241e-04$&$-$   \\\hline
 $\varepsilon$&\multicolumn{2}{c}{$2^{-15}$}&\multicolumn{2}{c}{$2^{-25}$}&\multicolumn{2}{c}{$2^{-30}$}\\\hline\hline
     $N$ &$E_n$&Ord&$E_n$&Ord&$E_n$&Ord\\\hline
$2^{6}$    &$3.9479e-02$&$2.01$  &$3.9483e-02$&$2.01$   &$3.9485e-02$&$2.01$ \\
$2^{7}$    &$1.3362e-03$&$1.93$  &$1.3363e-03$&$1.93$   &$1.3364e-03$&$1.93$  \\
$2^{8}$    &$4.5373e-03$&$1.96$  &$4.5377e-03$&$1.95$   &$4.5378e-03$&$1.95$  \\
$2^{9}$    &$1.8060e-03$&$1.97$  &$1.8147e-03$&$1.97$   &$1.8180e-03$&$1.97$\\
$2^{10}$   &$8.1249e-04$&$1.97$  &$8.1641e-04$&$1.97$   &$8.1645e-04$&$1.97$\\
$2^{11}$   &$4.0241e-04$&$-$     &$4.0434e-04$&$-$      &$4.0436e-04$&$-$  \\\hline
 $\varepsilon$&\multicolumn{2}{c}{$2^{-35}$}&\multicolumn{2}{c}{$2^{-40}$}&\multicolumn{2}{c}{$2^{-45}$}\\\hline
\end{tabular}
\caption{Errors $E_N$ and convergence rates Ord for approximate solutions from Example \ref{primjer3}.}\label{tabela3}
\end{table}
\end{center}

\begin{center}
\begin{table}[!h]\small
\centering
\begin{tabular}{c|cc|cc|cc}\hline
     $N$ &$E_n$&Ord&$E_n$&Ord&$E_n$&Ord\\\hline
$2^{6}$  &$8.8623e-04$&$2.09$   &$3.4567e-03$&$2.11$    &$1.1656e-02$&$2.10$ \\
$2^{7}$  &$2.8728e-05$&$1.92$   &$1.1085e-03$&$1.93$    &$3.7537e-03$&$1.91$ \\
$2^{8}$  &$9.8102e-05$&$1.96$   &$3.7643e-04$&$1.95$    &$1.2923e-03$&$1.98$ \\
$2^{9}$  &$3.9049e-05$&$1.98$   &$1.5054e-04$&$1.98$    &$4.1404e-04$&$1.99$ \\
$2^{10}$ &$1.7496e-05$&$1.98$   &$6.7451e-05$&$1.99$    &$1.2855e-04$&$2.00$ \\
$2^{11}$ &$8.6345e-06$&$-$      &$3.3169e-05$&$-$       &$3.8914e-05$&$-$ \\\hline
 $\varepsilon$&\multicolumn{2}{c}{$2^{-3}$}&\multicolumn{2}{c}{$2^{-5}$}&\multicolumn{2}{c}{$2^{-10}$}\\\hline\hline
    $N$    &$E_n$&Ord&$E_n$&Ord&$E_n$&Ord\\\hline
$2^{6}$    &$1.1656e-02$&$2.10$  &$1.1656e-02$&$2.10$   &$1.1656e-02$&$2.10$ \\
$2^{7}$    &$3.7537e-03$&$1.91$  &$3.7537e-03$&$1.91$   &$3.7537e-03$&$1.91$  \\
$2^{8}$    &$1.2923e-03$&$1.98$  &$1.2923e-03$&$1.98$   &$1.2923e-03$&$1.98$  \\
$2^{9}$    &$4.1404e-04$&$1.99$  &$4.1404e-04$&$1.99$   &$4.1404e-04$&$1.99$ \\
$2^{10}$   &$1.2855e-04$&$2.00$  &$1.2855e-04$&$2.00$   &$1.2855e-04$&$2.00$\\
$2^{11}$   &$3.8914e-05$&$-$     &$3.8914e-05$&$-$      &$3.8914e-05$&$-$ \\\hline
 $\varepsilon$&\multicolumn{2}{c}{$2^{-15}$}&\multicolumn{2}{c}{$2^{-25}$}&\multicolumn{2}{c}{$2^{-30}$}\\\hline\hline
     $N$ &$E_n$&Ord&$E_n$&Ord&$E_n$&Ord\\\hline
$2^{6}$    &$1.1656e-02$&$2.10$  &$1.1656e-02$&$2.10$   &$1.1656e-02$&$2.10$ \\
$2^{7}$    &$3.7537e-03$&$1.91$  &$3.7537e-03$&$1.91$   &$3.7537e-03$&$1.91$  \\
$2^{8}$    &$1.2923e-03$&$1.98$  &$1.2923e-03$&$1.98$   &$1.2923e-03$&$1.98$ \\
$2^{9}$    &$4.1404e-04$&$1.99$  &$4.1404e-04$&$1.99$   &$4.1404e-04$&$1.99$ \\
$2^{10}$   &$1.2855e-04$&$2.00$  &$1.2855e-04$&$2.00$   &$1.2855e-04$&$2.00$\\
$2^{11}$   &$3.8914e-05$&$-$     &$3.8914e-05$&$-$      &$3.8914e-05$&$-$\\\hline
 $\varepsilon$&\multicolumn{2}{c}{$2^{-35}$}&\multicolumn{2}{c}{$2^{-40}$}&\multicolumn{2}{c}{$2^{-45}$}\\\hline
\end{tabular}
\caption{Errors $E_N$ and convergence rates Ord for approximate solutions from Example \ref{primjer4}.}\label{tabela4}
\end{table}
\end{center}


\newpage

\begin{figure}[!h]
     \label{grfaik4}
     \begin{subfigure}[b]{.5\textwidth}\centering
        \includegraphics[scale=.45]{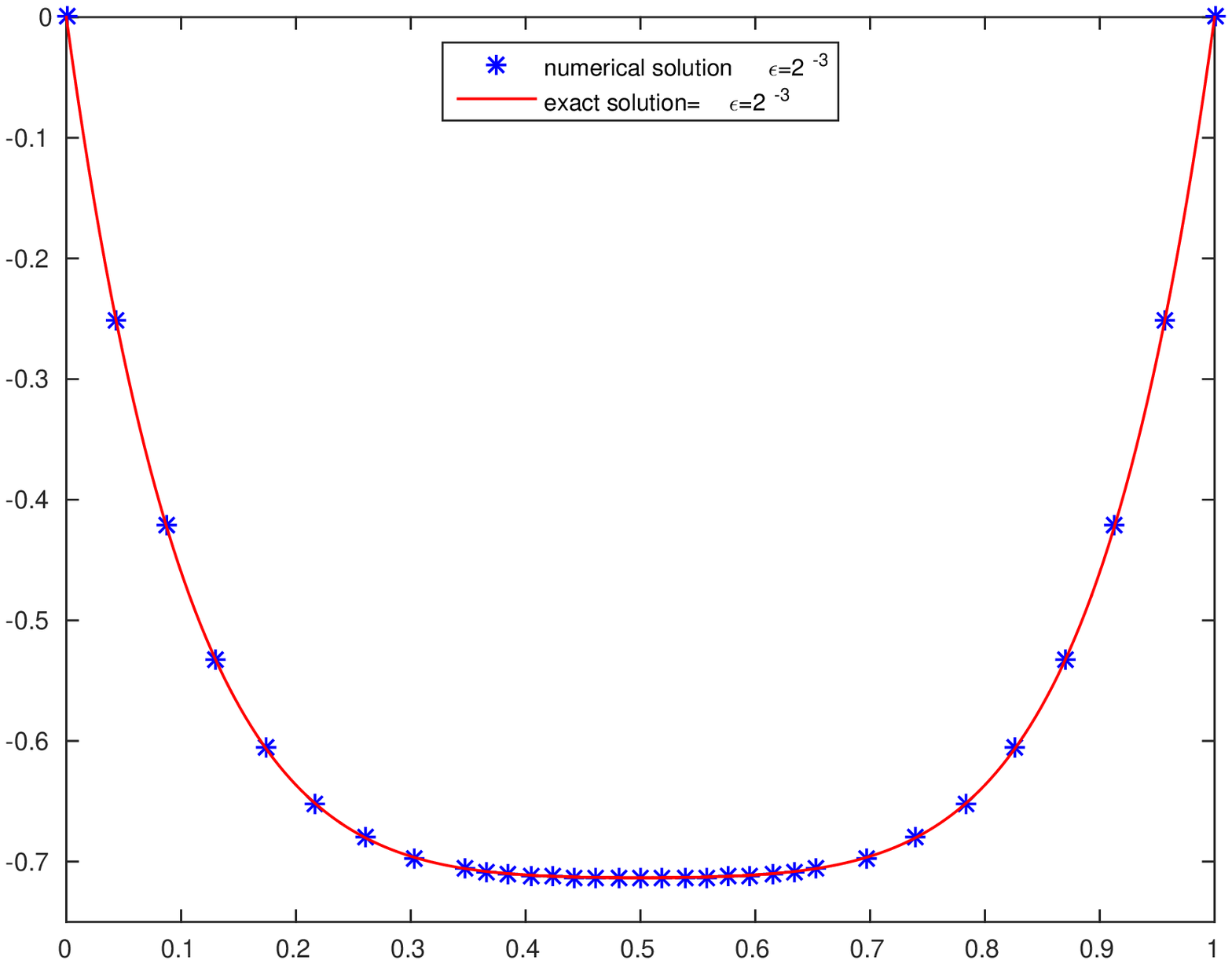}
         \caption{$\varepsilon=2^{-3},\:N=32$}
          \label{grqfik1}
   \end{subfigure}
    \begin{subfigure}[b]{.5\textwidth}\centering
        \includegraphics[scale=.45]{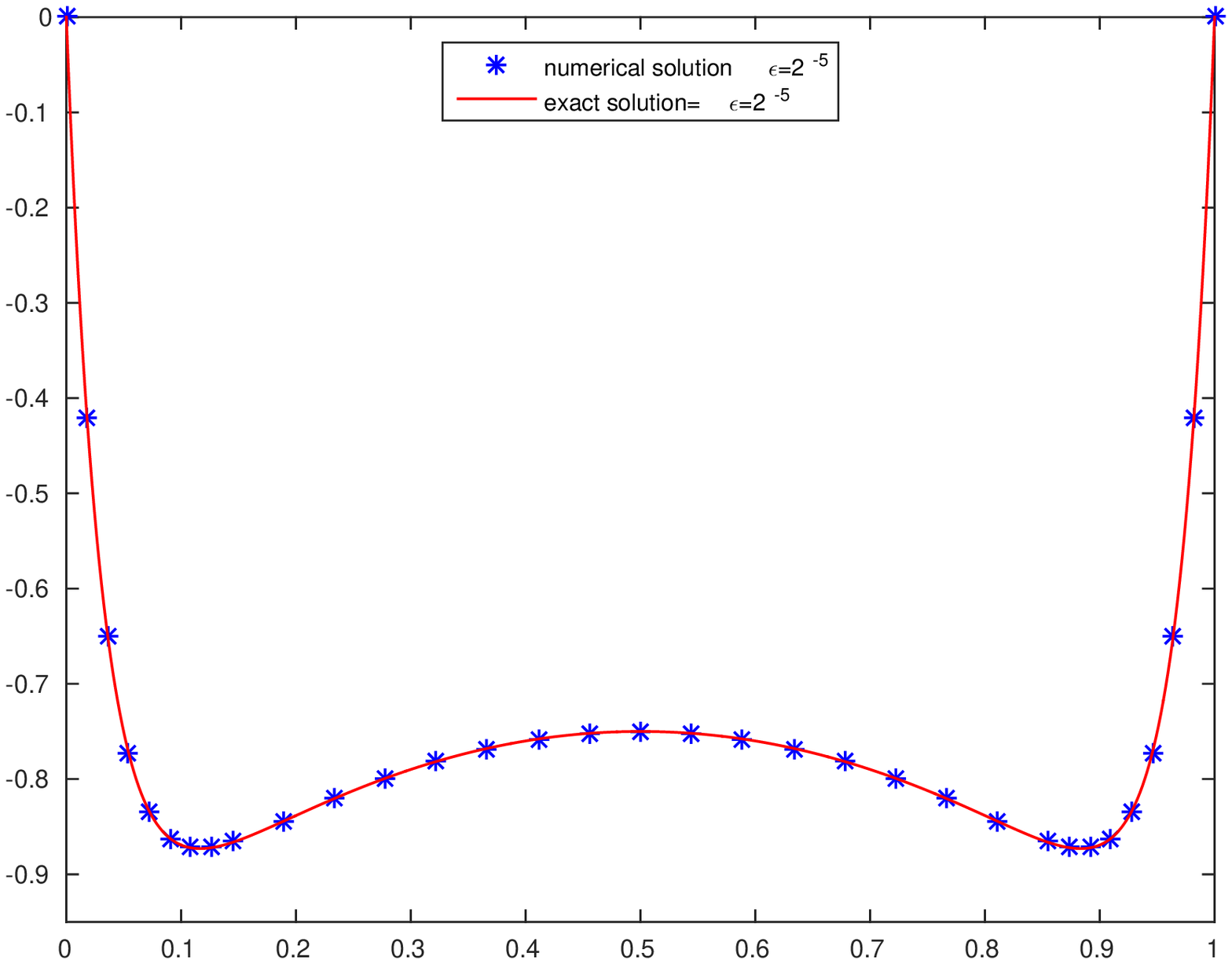}
         \caption{$\varepsilon=2^{-5},\:N=32$}
      \label{grqfik2}   
     \end{subfigure}
     \\
     \begin{subfigure}[b]{.5\textwidth}\centering
        \includegraphics[scale=.45]{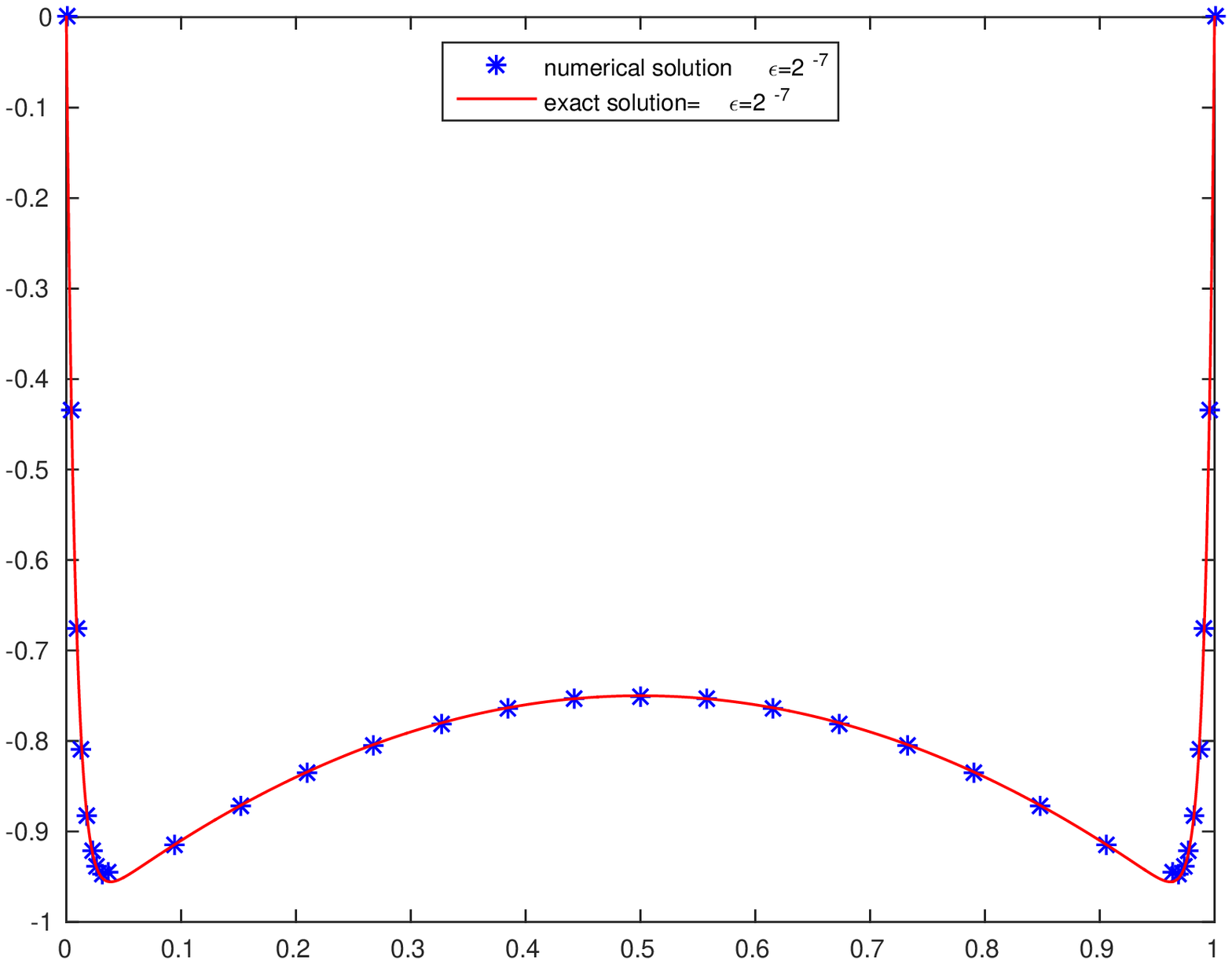}
         \caption{$	\varepsilon=2^{-7},\:N=32$}
      \label{grafik3} 
      \end{subfigure}
       \begin{subfigure}[b]{.5\textwidth}\centering
        \includegraphics[scale=.45]{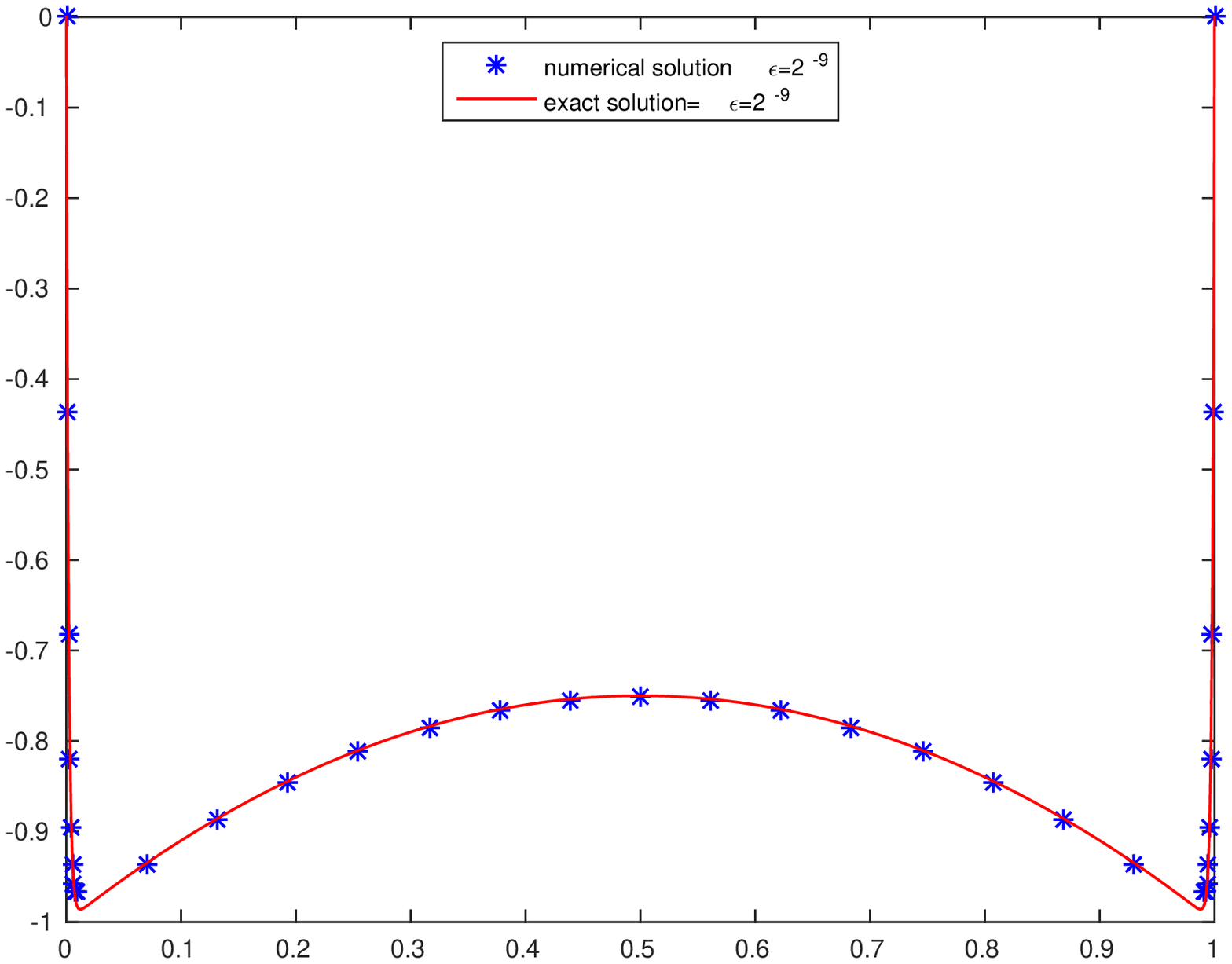}
         \caption{$\varepsilon=2^{-9},\:N=32$}
          \label{grafik4}
   \end{subfigure}
        \caption{Graphics of the numerical and exact solutions for $N=32$ and $\varepsilon=2^{-3},\,2^{-5},\,2^{-7},\,2^{-9}$ 
                    for Example \ref{primjer1} and Example \ref{primjer3}  }
  \end{figure}

\begin{figure}[!h]
     \begin{subfigure}[b]{.5\textwidth}\centering
        \includegraphics[scale=.45]{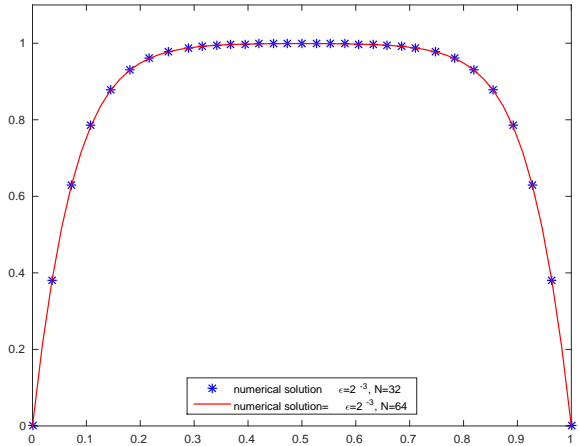}
         \caption{$\varepsilon=2^{-3},\:N=32$}
          \label{grqfik5}
   \end{subfigure}
    \begin{subfigure}[b]{.5\textwidth}\centering
        \includegraphics[scale=.45]{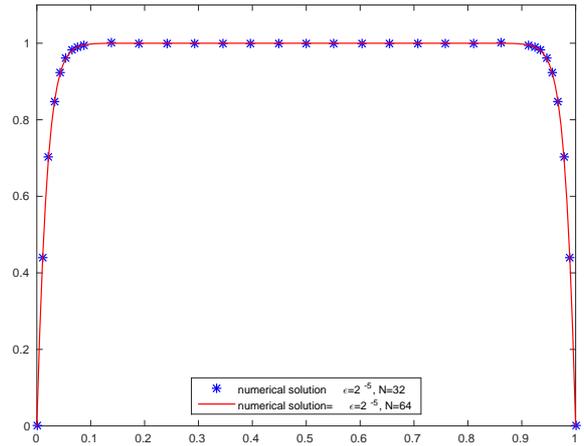}
         \caption{$\varepsilon=2^{-5},\:N=32$}
      \label{grqfik6}   
     \end{subfigure}
     \\
     \begin{subfigure}[b]{.5\textwidth}\centering
        \includegraphics[scale=.45]{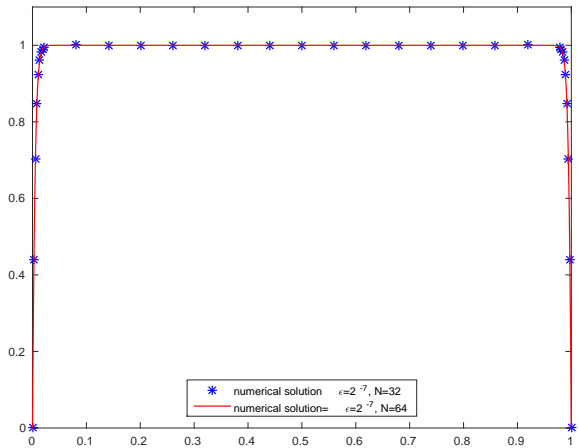}
         \caption{$	\varepsilon=2^{-7},\:N=32$}
      \label{grafik7} 
      \end{subfigure}
       \begin{subfigure}[b]{.5\textwidth}\centering
        \includegraphics[scale=.45]{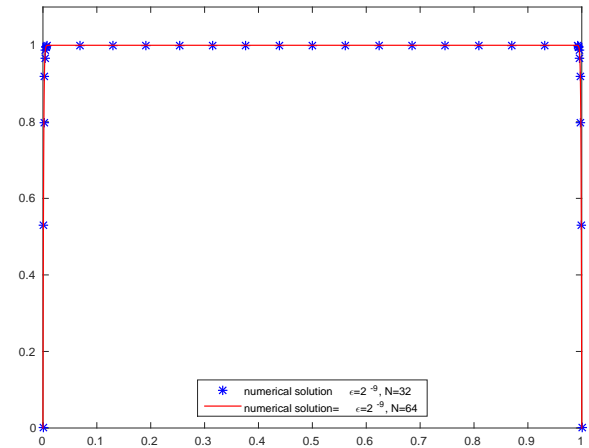}
         \caption{$\varepsilon=2^{-9},\:N=32$}
          \label{grafik8}
   \end{subfigure}
        \caption{Graphics of the numerical and solutions for $N=32,\,64$ and $\varepsilon=2^{-3},\,2^{-5},\,2^{-7},\,2^{-9}$  for Example \ref{primjer2} and Example \ref{primjer4} } 
     \label{grafik4}
  \end{figure}

%

%
\newpage


\noindent Department of Mathematics\\
Faculty of sciences and Mathematics,\\
University of Tuzla\\
Univerzitetska 4, 75000 Tuzla,\\
Bosnia and Herzegovina\\
E-mail address:\texttt{  samir.karasuljic@untz.ba}\\\\

\noindent Department of Mathematics\\
Faculty of sciences and Mathematics,\\
University of Tuzla\\
Univerzitetska 4, 75000 Tuzla,\\
Bosnia and Herzegovina\\
E-mail address: \texttt{enes.duvnjakovic@untz.ba}\\\\

\noindent Department of Mathematics\\
Faculty of sciences and Mathematics,\\
University of Tuzla\\
Univerzitetska 4, 75000 Tuzla,\\
Bosnia and Herzegovina\\
E-mail address: \texttt{memic{\_}91{\_}elvir@hotmail.com}
\end{document}